\newcommandx{\attn}[2][1=]{\todo[linecolor=red,backgroundcolor=blue!25,bordercolor=red,#1]{#2}}
\newcommandx{\other}[2][1=]{\todo[linecolor=OliveGreen,backgroundcolor=OliveGreen!25,bordercolor=OliveGreen,#1]{#2}}
\newcommandx{\thiswillnotshow}[2][1=]{\todo[disable,#1]{#2}}
\newtheorem{theorem}{Theorem}[section]
\newtheorem{lemma}[theorem]{Lemma}
\newtheorem{proposition}[theorem]{Proposition}
\newtheorem{example}[theorem]{Example}
\newcommand{\beq}{\begin{equation}}
\newcommand{\eeq}{\end{equation}}
\newcommand{\beqa}{\begin{eqnarray}}
\newcommand{\eeqa}{\end{eqnarray}}
\newcommand{\beqas}{\begin{eqnarray*}}
\newcommand{\eeqas}{\end{eqnarray*}}
\newcommand{\bi}{\begin{itemize}}
\newcommand{\ei}{\end{itemize}}
\newcommand{\PSDcone}[1]{{\mathcal{S}^{#1}_+}}
\renewcommand{\S}{\mathcal{S}}      
\renewcommand{\Re}{\mathbb{R}}
\def\endproof{{\ \hfill\hbox{%
      \vrule width1.0ex height1.0ex
    }\parfillskip 0pt}\par}
\def\qed{\ifhmode\unskip\nobreak\fi\ifmmode\ifinner\else\hskip5pt\fi\fi
  \hbox{\hskip5pt\vrule width5pt height5pt depth1.5pt\hskip1pt}}
\def\QED{\ifhmode\unskip\nobreak\fi\ifmmode\ifinner\else\hskip5pt\fi\fi
  \hbox{\hskip5pt\vrule width5pt height5pt depth1.5pt\hskip1pt}}
\newcommand{\ambSpace}{\ensuremath{\mathcal{E}}}
\newcommand{\stdCone}{ \ensuremath{\mathcal{K}}}
\newcommand{\stdFace}{ \ensuremath{\mathcal{F}}}
\newcommand{\stdAff}{ \ensuremath{\mathcal{V}}}
\newcommand{\reInt}{\ensuremath{\mathrm{ri}\,}}
\newcommand{\inProd}[2]{\langle #1 , #2 \rangle }
\numberwithin{equation}{section}
\renewcommand{\S}{\mathcal{S}} 
\renewcommand{\Re}{\mathbb{R}} 
\def\av{{v_a}}
\newcommand{\norm}[1]{\lVert{#1}\rVert}
\newcommand{\vJ}{\bar v(J)}
\begin{document}

\title{
Closing Duality Gaps of SDPs through Perturbation\thanks{The first author is supported in part by JSPS Grant-in-Aid for Scientific Research (B) 21H03398,
	the second author is supported in part by the same grant and JSPS Grant-in-Aid for Young Scientists JP19K20217,
	the third author is supported in part by JSPS Grant-in-Aid for Scientific Research (C) JP17K00031 and
	the same grant for Scientific Research (B) JP20H04145, and 
	the fourth author is supported in part by JSPS Grant-in-Aid for Young Scientists JP20K19748 and the same grant for Scientific Research (B)20H04145.
}  \\ \ \ \ \\
}


\author{Takashi Tsuchiya\footnote{National Graduate Institute for Policy Studies, 7-22-1 Roppongi, Minato-ku, Tokyo 106-8677 Japan, 
              e-mail: {tsuchiya@grips.ac.jp}}           %
\and
         Bruno F.~Louren\c{c}o\footnote{The Institute of Statistical Mathematics, Midori-cho 10-3, Tachikawa, 190-8562 Tokyo Japan, e-mail: {bruno@ism.ac.jp}} \and Masakazu Muramatsu\footnote{The University of Electro-Communications, 
           1-5-1 Chofugaoka, Chofu, Tokyo 182-8585 Japan, e-mail: MasakazuMuramatsu@uec.ac.jp } \and Takayuki Okuno\footnote{
Faculty of Science and Technology Department of Science and Technology, Seikei University, 3-3-1, Kita, Kichijoji, Musashino, Tokyo, 180-8633, Japan; 
Center for Advanced Intelligence Project, RIKEN, 1-4-1 Nihonbashi, Chuo-ku, Tokyo 103-0027 Japan.
email: {takayuki-okuno@st.seikei.ac.jp}       }
}

\date{April 2023}

\maketitle
\begin{abstract}
Let $({\bf P},{\bf D})$ be a primal-dual pair of SDPs with a nonzero finite duality gap. Under such circumstances, ${\bf P}$ and ${\bf D}$ are weakly feasible and if we perturb the problem data to recover strong feasibility, the (common) optimal value function $v$ as a function of the perturbation is not well-defined at zero (unperturbed data) since there
are ``two different optimal values'' $v({\bf P})$ and $v({\bf D})$, where $v({\bf P})$ and $v({\bf D})$ are the optimal values of ${\bf P}$ and ${\bf D}$ respectively.   Thus, continuity of $v$ is lost at zero though $v$ is continuous elsewhere.
Nevertheless, we show that a limiting version $\av$ of $v$ is a well-defined monotone decreasing continuous bijective function connecting $v({\bf P})$ and $v({\bf D})$ with domain $[0, \pi/2]$ under the assumption that both ${\bf P}$ and ${\bf D}$ have singularity degree one. The domain $[0,\pi/2]$ corresponds to directions of perturbation defined in a certain manner.
Thus, $\av$ ``completely fills'' the nonzero duality gap under a mild regularity condition. Our result is tight in that there exists an instance with singularity degree two for which $\av$ is not continuous.

{\bf Keywords}: Semidefinite programs, nonzero duality gaps, perturbation, regularization, facial reduction
\end{abstract}

\baselineskip=15pt
\section{Introduction}

Consider the standard form dual pair of semidefinite programs:
\begin{align}
\ \ \ \min_{X} \ C \bullet X \ \ &{\rm{s.t.}}\ A^i \bullet X = b_i,\ i=1, \ldots, m, X\succeq 0 \tag{{\bf{P}}} \label{eq:p}\\
\ \ \ \max_{y,S} \ b^T y \ \ \ \ &{\rm{s.t.}} \ C - \sum_{i=1}^m A^i y_i  = S,\ S\succeq 0, \tag{{\bf{D}}}\label{eq:d}
\end{align} 
where $C$, $A^i, i = 1, \ldots, m$, $X$, and $S$ are real symmetric $n\times n$ matrices and $y \in \Re^m$.    
We denote the optimal values of {\bf P} and {\bf D} by $v({\bf P})$ and $v({\bf D})$, respectively.
We use analogous notation throughout this paper to represent optimal values.
We assume that {\bf P} and {\bf D} are feasible but not necessarily strongly feasible, i.e., neither {\bf P} nor {\bf D} satisfy Slater's condition.
Under this assumption, {\bf P} and {\bf D} may have a finite nonzero duality gap as shown in 
the following famous example adapted from Ramana \cite{Ramana95anexact}:

\begin{example}[\!\!{\cite[Example~4]{Ramana95anexact}}]\label{ex:gap}
The problem {\bf D} is
\[
\max\ y_1\ {\rm{s.t.}}\ \ \left(\begin{array}{ccc} 1 -y_1 & 0 & 0 \\
0 & -y_2 & -y_1 \\ 
0 & -y_1 & 0
\end{array}\right) \succeq 0.
\]
With that, we have
\[
C=
\left(\begin{array}{ccc} 1 & 0 & 0 \\
0 & 0 & 0 \\ 
0 & 0 & 0
\end{array}\right),\ \ \ 
A^1=
\left(\begin{array}{ccc} 1 & 0 & 0 \\
0 & 0 & 1 \\ 
0 & 1 & 0
\end{array}\right),\ \ \ 
A^2=
\left(\begin{array}{ccc} 0 & 0 & 0 \\
0 & 1 & 0 \\ 
0 & 0 & 0
\end{array}\right),\ \ \ b_1=1.
\]
We have $v({\bf D})= 0$ for this problem, because $y_1=0$ is the only possible value for the lower-right $2\times 2$ submatrix to be 
positive semidefinite.

The associated primal {\bf P} is
\[
\min\ x_{11}\ \ {\rm{s.t.}}\ x_{11}+ 2 x_{23} = 1, \ x_{22}=0,\ \left(\begin{array}{ccc} x_{11} & x_{12} & x_{13}\\ x_{12} & x_{22} & x_{23}\\
x_{13} & x_{23} & x_{33}\end{array}\right)\succeq 0.
\]
We have $v({\bf P})=1$ for this problem, because $x_{23}=0$ must hold for positive semidefiniteness of the lower-right $2\times 2$
submatrix, which drives $x_{11}$ to be 1.
\end{example}

%
%
%

In general, \ref{eq:p} and \ref{eq:d} might fail to have interior feasible solutions and this opens the possibility of nonzero duality gaps as in Example~\ref{ex:gap}.
On the other hand, most algorithms for SDP are constructed under the assumption that {\bf P} and {\bf D} have 
interior feasible solutions.  This leads naturally to the question of how to solve SDPs with  finite nonzero duality gaps with the usual algorithms.
A simple approach to fix that is to perturb the problem data so that constraint qualifications are satisfied \cite{ipm:Potra16,sremac2017complete,TLMO2019}. We can consider, for example, 
the following perturbed/regularized system ${\bf P}(\varepsilon,\eta)$ and ${\bf D}(\varepsilon,\eta)$ from \cite{TLMO2019}. 


\medskip\noindent
{\bf Regularized Primal-Dual Standard Form SDP (RPD-SDP)}
\begin{equation}\label{Pptbd}
{\bf P}(\varepsilon,\eta):\ \ \ \min_{X} \ (C + \varepsilon I)\bullet X \ \ {\rm{s.t.}}\ A^i \bullet X = b_i+ \eta A^i\bullet I,\ i=1, \ldots, m,\ X\succeq 0
\end{equation}
and 
\begin{equation}\label{Dptbd}
{\bf D}(\varepsilon,\eta):\ \ \ \max_{y,S} \sum_{i=1}^m (b_i + \eta A^i\bullet I)y_i \ \ {\rm{s.t.}} \ C - \sum_{i=1}^m A^i y_i + \varepsilon I = S,\ \ \ S\succeq 0,
\end{equation}
where $I$ denotes the $n\times n$ identity matrix.
We call the pair \eqref{Pptbd} and \eqref{Dptbd} the {\em Regularized Primal-Dual Standard Form SDP or 
RPD-SDP for short}.
${\bf P}(\varepsilon,\eta)$ and ${\bf D}(\varepsilon,\eta)$ reduce to
{\bf P} and {\bf D} when $\varepsilon$ and $\eta$ are set to zero.
RPD-SDP  is obtained by relaxing the semidefinite constraints $X\succeq 0$ of {\bf P} and $S\succeq 0$ of 
{\bf D} to $X\succeq -\eta I$ and $S\succeq -\varepsilon I$, respectively, and 
by redefining $X:=X+\eta I$ and $S:=S+\varepsilon I$. 

Under the assumption that {\bf P} and {\bf D} are feasible, the perturbed problems admit interior feasible solutions for any $\varepsilon>0$
and $\eta>0$, so, in this sense, the lack of interior solutions of {\bf P} and {\bf D} is fixed. However, this is only useful if something can be said about how the optimal values of ${\bf D}(\epsilon,\eta)$ and ${\bf P}(\epsilon,\eta)$ relate to the optimal values of the original ${\bf P}$ and ${\bf D}$, so let us briefly examine this issue.

For $\varepsilon > 0$ and $\eta > 0$, 
Slater's condition is satisfied at both  \eqref{Pptbd} and \eqref{Dptbd}, so they have optimal solutions and a 
common optimal value, which we denote by  $v(\varepsilon,\eta)$.
In the sequel, $v(\cdot,\cdot)$ is referred to as {\em pd-regularized optimal value function}. 
We note that 
$v(\varepsilon,0)$ and $v(0,\eta)$ are also well-defined for any $\varepsilon>0$
and $\eta > 0$, since ${\bf P}(\varepsilon,0)$ and ${\bf D}(0,\eta)$ have interior feasible solution for any $\varepsilon>0$ and $\eta>0$ and there is no duality gap in these cases according to the standard duality theory 
for convex programming.

But $v(0,0)$ is different since it is not well-defined when there exists a finite nonzero duality gap, and ironically,
{\em the value of $v(0,0)$} is what we really wish to compute.
Thus, the perturbation/regularization approach might not be theoretically sound when there are nonzero duality gaps.

That said, we have recently analyzed the behavior of  the pd-regularized optimal value function $v(\varepsilon,\eta)$ in the neighbourhood of 
$(\varepsilon,\eta)=(0,0)$ and demonstrated that
$v(\varepsilon,\eta)$ have a directional limit when approaching $(0,0)$, see \cite{TLMO2019}. 
Let us define the directional limit
\begin{equation}\label{eq:av}
\av(\theta) \coloneqq \lim_{t\downarrow 0}v(t\cos\theta,t\sin\theta).
\end{equation}
The function $\av$ is referred to as {\em limiting pd-regularized optimal value function}.
Then, the following theorem holds.
\begin{theorem}[\!\!{\cite[Theorem~2]{TLMO2019}}]\label{thm:tlmo}
If {\bf~P} and {\bf~D} are feasible, the limiting pd-regularized optimal value function $\av(\theta)$ has the following properties.
\begin{enumerate}
\item $\av(0)=v({\bf P})$, $\av(\pi/2)=v({\bf D}).$
\item  $\av(\theta)$ is monotone decreasing in $[0,\pi/2]$ and is continuous on $(0,\pi/2)$.
\end{enumerate}
\end{theorem}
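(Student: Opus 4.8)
The plan is to obtain the two endpoint values by an elementary squeeze, to reduce the monotonicity of $\av$ to the bare existence of its defining directional limits, and to get that existence together with interior continuity from a facial-reduction analysis of how the perturbed feasible sets degenerate as one approaches the origin. \emph{Endpoints.} For $\av(0)=\lim_{t\downarrow 0}v(t,0)$: since ${\bf D}$ is feasible, ${\bf D}(t,0)$ has a Slater point for $t>0$ (if $S=C-\sum_iA^i\bar y_i\succeq 0$ then $S+tI\succ 0$), so strong duality gives $v(t,0)=\min\{(C+tI)\bullet X:A^i\bullet X=b_i,\ X\succeq 0\}$, where the feasible set is exactly that of ${\bf P}$. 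On that set $(C+tI)\bullet X=C\bullet X+t\,(I\bullet X)$ with $I\bullet X\ge 0$, so $v(t,0)\ge v({\bf P})$; and for each $\delta>0$ picking a feasible $X_\delta$ with $C\bullet X_\delta\le v({\bf P})+\delta$ gives $v(t,0)\le v({\bf P})+\delta+t\,(I\bullet X_\delta)$, so $\limsup_{t\downarrow 0}v(t,0)\le v({\bf P})$; hence $\av(0)=v({\bf P})$. Symmetrically ${\bf P}(0,t)$ has a Slater point for $t>0$, so $v(0,t)$ equals the value of ${\bf D}(0,t)$, whose feasible set is that of ${\bf D}$ and whose objective is $b^Ty+t\,\bigl(I\bullet\textstyle\sum_iA^iy_i\bigr)$ with $I\bullet\sum_iA^iy_i\le I\bullet C$ there (as $\sum_iA^iy_i\preceq C$); the same squeeze yields $\av(\pi/2)=v({\bf D})$. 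Feasibility of both problems makes $v({\bf P}),v({\bf D})$ finite, so all of this is legitimate.

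\emph{Reducing monotonicity to existence of the limit.} The substitution $X=Z+\eta I$ in ${\bf P}(\varepsilon,\eta)$ gives, for $(\varepsilon,\eta)\neq(0,0)$ in the closed quadrant, $v(\varepsilon,\eta)=\eta\,(I\bullet C+\varepsilon n)+g(\varepsilon,\eta)$ where $g(\varepsilon,\eta)\coloneqq\min\{(C+\varepsilon I)\bullet Z:A^i\bullet Z=b_i,\ Z\succeq -\eta I\}$. Along $(\varepsilon,\eta)=(t\cos\theta,t\sin\theta)$ the prefactor vanishes as $t\downarrow 0$, so $\av(\theta)=\lim_{t\downarrow 0}g(t\cos\theta,t\sin\theta)$ precisely when this limit exists; assume it does. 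For $0\le\theta_1<\theta_2\le\pi/2$ and $t>0$, enlarging the admissible set from $\{Z\succeq -t\sin\theta_1 I\}$ to $\{Z\succeq -t\sin\theta_2 I\}$ can only lower the minimum, and on the larger set $I\bullet Z\ge -nt\sin\theta_2$, so replacing the objective coefficient $t\cos\theta_2$ by the larger $t\cos\theta_1$ lowers the minimum by at most $nt^2\sin\theta_2(\cos\theta_1-\cos\theta_2)$; combining, $g(t\cos\theta_1,t\sin\theta_1)\ge g(t\cos\theta_2,t\sin\theta_2)-nt^2$. Letting $t\downarrow 0$ gives $\av(\theta_1)\ge\av(\theta_2)$. (The $O(t^2)$ error genuinely must be absorbed in the limit: $g$ itself need not be monotone in $\theta$ at a fixed $t$.) Thus monotonicity on $[0,\pi/2]$ is free once the directional limits are known to exist.

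\emph{Existence of the directional limit and interior continuity.} On the punctured closed quadrant, ${\bf P}(\varepsilon,\eta)$ and ${\bf D}(\varepsilon,\eta)$ satisfy Slater (on the open quadrant both; at $(\varepsilon_0,0)$ the dual, at $(0,\eta_0)$ the primal), a stable condition; by standard conic-programming perturbation theory $v$ is finite and continuous there, so $(t,\theta)\mapsto v(t\cos\theta,t\sin\theta)$ is continuous for $t>0$. What remains is the limit $t\downarrow 0$ of $g(t\cos\theta,t\sin\theta)=\min\{(C+t\cos\theta I)\bullet Z:A^i\bullet Z=b_i,\ Z\succeq -t\sin\theta I\}$: the feasible sets shrink to $\{A^i\bullet Z=b_i,\ Z\succeq 0\}$, but near-optimal $Z_t$ may blow up like $1/t$ — this is exactly how a value strictly below $v({\bf P})$ arises — so one must pin down $\lim\bigl(C\bullet Z_t+t\cos\theta\,(I\bullet Z_t)\bigr)$. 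The plan is facial reduction: the facial structure of ${\bf P}$ (the chain of faces of $\mathcal S^n_+$ meeting the affine set $\{A^i\bullet Z=b_i\}$, together with the dual picture for ${\bf D}$, i.e.\ the singularity degrees) decides which components of $Z_t$ are forced to scale like $1/t$ and which stay bounded; pushing the reduction through produces a reduced problem whose optimal value equals the limit, establishing existence and giving a formula for $\av(\theta)$. Continuity of $\av$ on $(0,\pi/2)$ then follows from that characterization together with continuity of $v$ off the origin, once the convergence $g(t\cos\theta,t\sin\theta)\to\av(\theta)$ is shown to be locally uniform in $\theta$.

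\emph{Main obstacle.} The hard part is precisely this last uniformity — interchanging the limits $t\downarrow 0$ and $\theta\to\theta_0$ — which amounts to bounding, uniformly for $\theta$ in a compact subinterval of $(0,\pi/2)$, the rate at which the perturbed feasible regions approach their degenerate limits and ruling out non-uniform escape of near-optimal solutions to infinity; the facial-reduction machinery is what supplies this. No regularity hypothesis is needed for Theorem~\ref{thm:tlmo} on the open interval, but exactly this escape phenomenon can become non-uniform as $\theta\to 0$ or $\theta\to\pi/2$, which is why continuity at the endpoints requires the stronger control (the singularity-degree-one hypothesis used later in the present paper).
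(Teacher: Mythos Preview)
This theorem is not proved in the present paper at all; it is quoted as \cite[Theorem~2]{TLMO2019}, so there is no in-paper proof to compare against line by line. That said, the paper does expose the skeleton of the \cite{TLMO2019} argument through Proposition~\ref{barv} and its proof, and against that your proposal has one correct block, one correct-but-roundabout block, and one genuine gap.

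\medskip
\textbf{Endpoints.} Your squeeze arguments for $\av(0)=v({\bf P})$ and $\av(\pi/2)=v({\bf D})$ are correct and complete; this is also how the paper uses them later (Lemma~\ref{l3}).

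\medskip
\textbf{Monotonicity.} Your argument via $g$ with an $O(t^2)$ error term is valid, but more work than necessary. The cleaner route, visible in the proof of Proposition~\ref{barv}, is to use the dual formulation: for fixed $t>0$, the problems ${\bf D}(t\alpha_1,t)$ and ${\bf D}(t\alpha_2,t)$ with $\alpha_1<\alpha_2$ have the \emph{same} objective (it depends only on $\eta=t$) while the feasible region in $y$ strictly enlarges with $\alpha$. Hence $v(t\alpha_1,t)\le v(t\alpha_2,t)$ exactly, with no error term to absorb. Your primal reformulation through $g$ mixes the two parameters into both objective and constraint, which is what creates the $O(t^2)$ slack.

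\medskip
\textbf{Existence of the directional limit and interior continuity.} Here there is a real gap, which you yourself flag under ``Main obstacle.'' You have not proved that $\lim_{t\downarrow 0}v(t\cos\theta,t\sin\theta)$ exists, nor that $\av$ is continuous on $(0,\pi/2)$; you have only outlined a plan via facial reduction and a hoped-for local uniformity in $\theta$. Two points about this plan. First, facial reduction is the heavy tool the present paper brings in for the \emph{endpoint} continuity in Theorem~\ref{theo:main}, and even there it is only made to work under the singularity-degree-one hypothesis; Theorem~\ref{thm:tlmo} holds with no such hypothesis, so a facial-reduction route to mere interior continuity would be both overkill and unlikely to close without extra assumptions. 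Second, the approach actually used in \cite{TLMO2019} (and reproduced in Proposition~\ref{barv}) bypasses uniform convergence entirely: for each fixed $t>0$, $\beta\mapsto v(t,t\beta)$ is convex (dual objective is affine in $\eta$, feasible region fixed), hence the pointwise limit $\tilde v$ is convex on $\Re_+$, and a finite convex function is automatically continuous on the relative interior of its domain. That single convexity observation, together with the separately established existence of the limit (\cite[Theorem~1]{TLMO2019}), gives item~2 immediately---no facial structure, no uniform estimates.
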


In the case of Example~\ref{ex:gap}, $\av(\theta)$ is written as follows:
\begin{equation}\label{va}
\av(\theta)=
\begin{cases}1-\tan\theta &\hbox{if}\ 0 \leq \tan\theta\leq\frac12,\\ 
\frac14\cot\theta &\hbox{if}\ 1/2\leq \tan\theta. 
\end{cases}
\end{equation}
See  \cite[Example 1]{TLMO2019} for its derivation\footnote{The formula obtained in \cite{TLMO2019} is slightly different than \eqref{va}.  But we obtain exactly the same formula by letting $\alpha=\cos\theta$ and $\beta=\sin\theta$.}.
In this instance, 
it can be shown that $\av$ is a strictly monotone decreasing continuous function on the closed interval $[0, \pi/2]$.
This implies that, for each value $w \in [v({\bf D}), v({\bf P})]$,
there exists a unique angle $\theta\in [0, \pi/2]$ such that $\av(\theta)=w$ holds.
Thus, the limiting pd-regularized optimal value function $\av$ is a continuous bijection from $[0,\pi/2]$ to 
$[v({\bf D}), v({\bf P})]$.  
This shows that the gap is only superficial and hidden continuity behind the gap was brought to the surface. 
We may say that the nonzero duality gap is ``filled completely'' because we may reach any value in $[v({\bf D}), v({\bf P})]$ by appropriately selecting $\theta$.
It would be nice if this structure existed for any SDP with a finite nonzero duality gap. 

Theorem~\ref{thm:tlmo}, however, does not exclude the
possibility that $\av$ is discontinuous on the boundary of domain at $\theta=0$ and $\theta=\pi/2$.
In particular, the continuity of  $\av(\theta)$ over the entire interval $[0,\pi/2]$ 
was left open in \cite{TLMO2019}.
If $\av$ turns out to be continuous on the both end points $\theta=0$ and
$\theta=\pi/2$, then,  $\av$ is a continuous function from $[0,\pi/2]$ to 
$[v({\bf D}), v({\bf P})]$, and 
as a consequence, by controlling the direction to which $(t\cos \theta, t \sin \theta)$ approaches $(0,0)$ we may force $\av(\theta)$ to assume any value in $ [v({\bf D}), v({\bf P})]$. 

Now we are ready to state our goal in this paper.  We presents two results under the assumption of a finite nonzero duality gap, 
one is positive and the other negative.
First, we will show that  $\av$ is continuous at $\theta=0$ and $\theta=\pi/2$ if the \emph{singularity degree} of {\bf D} and {\bf P} are both one, and consequently, the limiting pd-regularized optimal value function $\av$ is a continuous 
bijective function from $[0,\pi/2]$ to 
$[v({\bf D}), v({\bf P})]$ in that case.  Here, the singularity degree for a feasible SDP is defined as the
minimum number of facial reduction steps necessary to regularize the problem (see Section~\ref{sec:pre}). 
Then, second, we present an instance where $\av$ is discontinuous at $\theta = \pi/2$, thus showing that continuity cannot be expected to hold in general.

This paper is organized as follows.
In Section~\ref{sec:pre}, we discuss the notation and some prerequisite notions.
In Section~\ref{sec:main}, we analyze
the continuity of $\av$ under the assumption that the problems have singularity degree one.
In Section~\ref{sec:counter_ex} we present an instance to show that $\av$ may be discontinuous in general. 
Section~\ref{sec:conc} presents some concluding remarks.

\section{Preliminaries}\label{sec:pre}
In this section, we introduce some terminology and mathematical definitions. 
The space of real $p\times p$ symmetric matrices will be denoted by $\S^p$. The space of $p\times q$ real matrices will be denoted by $\Re^{p\times q}$. The cone of semidefinite matrices in $\S^p$ will be denoted by $\PSDcone{p}$. The $p$-dimensional nonnegative orthant will be denoted by $\Re^p_+$.
In this paper, we are using the convention that whenever the identity matrix $I$ appears in a expression it has the ``correct size'' as to make the expression well-defined.

We recall that \ref{eq:p} is said to be \emph{strongly feasible}
if there exists a positive definite feasible solution $X$ to \ref{eq:p}. This is the same as saying that \ref{eq:p} satisfies \emph{Slater's condition}.  \ref{eq:p} is said to be \emph{weakly feasible} if it is feasible but not strongly feasible.
Similar definitions apply to \ref{eq:d}. 

In semidefinite programming it sometimes happens 
that the optimal value of \ref{eq:p} or \ref{eq:d} is finite, but no optimal solution exists\footnote{Consider, for example, $\min x_{11}$ s.t. $ \begin{psmallmatrix}
	x_{11} & 1 \\ 1 & x_{22}
	\end{psmallmatrix} \in \PSDcone{2} $. The optimal value is $0$ but not attained.}.
In any case, provided that \ref{eq:p} is feasible,  there exists a sequence of matrices $\{X^k\}$ such that the $X^k$ are feasible to \ref{eq:p} and $C\bullet X^k$ converges to $v({\bf P})$.
We say that such a sequence is {\em an optimal sequence} (for \ref{eq:p}).
Analogously, an optimal sequence for \ref{eq:d} is a sequence $\{(y^k,S^k)\}$ for which  $C - \sum_{i=1}^m A^i y_i^k  = S^k,\ S^k\succeq 0$ and $b^Ty^k \to v({\bf D})$ holds.

\paragraph{Singularity Degree}
Let $\stdCone \subseteq \ambSpace$ be a closed convex cone contained in a finite-dimensional Euclidean space $\ambSpace$ equipped with an inner product $\inProd{\cdot}{\cdot}$.
For a convex set ${\cal C} \subseteq \ambSpace$, let $\reInt {\cal C}$ denote its relative interior. Also, let ${\cal C}^*$ and ${\cal C}^\perp$ denote the dual cone and orthogonal complement of ${\cal C}$ with respect to $\inProd{\cdot}{\cdot}$, respectively.

Let $\stdAff \subseteq \ambSpace$ be an affine space such that $\stdAff \cap \stdCone \neq \emptyset$ and consider the following feasibility problem:
\begin{equation}\label{eq:feas}\tag{Feas}
\text{find} \quad x \in \stdAff \cap \stdCone.
\end{equation}
In this case, there exists an unique \emph{minimal face} $\stdFace$ of $\stdCone$ with the property that $(\stdAff \cap \stdCone) \subseteq  \stdFace$ and $\stdAff \cap (\reInt \stdFace) \neq \emptyset$. When \eqref{eq:feas} satisfies Slater's condition (i.e., $\stdAff \cap (\reInt\stdCone) \neq \emptyset$), then the face $\stdFace$ is $\stdCone$ itself.

The process of finding the minimal face $\stdFace$ is known as \emph{facial reduction} and is one of the standard approaches for handling conic linear programs that fail to satisfy constraint qualifications, e.g., \cite{borwein_facial_1981,sturm_error_2000,WM13,pataki_strong_2013,LMT15,LP17,DW17}.
While the original problem might suffer from pathologies arising from a lack of constraint qualifications, identifying $\stdFace$ makes it possible to construct a new equivalent problem where the absence of constraint qualifications is fixed.

The basic facial reduction algorithm as described, say, in \cite{WM13}, is based on the following observation. We have 
$\stdAff \cap (\reInt \stdCone)  = \emptyset$ (i.e., Slater's condition fails) if and only if 
$\stdAff$ and $\reInt \stdCone$ can be \emph{properly separated} by a hyperplane that does not contain $\reInt \stdCone$ entirely, see \cite[Theorem~20.2]{RT74}.
Under the hypothesis that $\stdAff \cap \stdCone \neq \emptyset$, this implies the existence of $s \in (\stdAff^\perp\cap \stdCone^*) \setminus \stdCone^\perp$, e.g., see \cite[Lemma~3.2]{WM13}. Then, letting 
\[
\stdFace \coloneqq \stdCone \cap \{s\}^\perp,
\]
$\stdFace$ is a face of $\stdCone$ strictly contained in $\stdCone$ (since $s \not \in \stdCone^\perp$) with the property that $\stdAff \cap \stdCone \subseteq \stdFace$. 
If it turns out that $\stdAff \cap \reInt \stdFace \neq \emptyset$, then we are done. Otherwise, we can apply the same separation result again to $\stdAff$ and $\stdFace$ and repeat the process. 
Since the dimension of faces decrease at each step, this process must end in a finite number of steps, e.g., \cite[Theorem~3.2]{WM13}. This leads to a chain of $\ell$ faces of $\stdCone$
\[
\stdFace _{\ell}  \subsetneq \cdots \subsetneq \stdFace_1 = \stdCone\]
and $\ell-1$ reducing directions $\{s_1, \ldots, s_{\ell-1}\}$ such that:
\begin{enumerate}[{\rm (i)}]
	\item \label{prop:fra1} for all $i \in \{1,\ldots, \ell -1\}$, we have
	\begin{flalign}\label{reduce}%
	s_i \in \stdFace _i^* \cap \stdAff^\perp \ \ \ {and}\ \ \
	\stdFace _{i+1} = \stdFace _{i} \cap \{s_i\}^\perp.
	\end{flalign}		
	\item \label{prop:fra2} $\stdFace _{\ell} \cap  \stdAff = \stdCone \cap \stdAff$ and 	$\stdAff \cap (\reInt\stdFace_{\ell}) \neq \emptyset$.
\end{enumerate}
We note that $\ell$ must be bounded above by the dimension of $\stdCone$, see also \cite{LMT15} for tighter bounds. Furthermore, item~\eqref{prop:fra2} implies that $\stdFace_{\ell}$ is  the minimal face of $\stdCone$ containing $\stdCone \cap \stdAff$.

There is freedom in the choice of the $s_i$ in item~\eqref{prop:fra1}, so difference choices of $s_i$'s might lead to a smaller or larger $\ell$.
The \emph{singularity degree} of \eqref{eq:feas} is the minimal number of \emph{facial reduction steps} (i.e., the $\ell$) in order to find the minimal face of \eqref{eq:feas} as in items \eqref{prop:fra1} and \eqref{prop:fra2}. 
The singularity degree was discussed extensively by Sturm in \cite{sturm_error_2000}, although Sturm's definition of singularity degree is slightly different from the most current usage of the term, see \cite[Footnote~3]{LMT15}.

For the main result of this paper, we will only need a discussion of problems having singularity degree one, so we shall focus on that. \eqref{eq:feas} is said to have \emph{singularity degree one} if it is feasible and there exists 
 $s \in (\stdAff^\perp\cap \stdCone^*) \setminus \stdCone^\perp$ such that
 \begin{equation*}
 \stdAff \cap (\reInt (\stdCone \cap \{s\}^\perp)) \neq \emptyset.
 \end{equation*}
 In the following, we discuss what this definition means for the 
problems \ref{eq:p} and \ref{eq:d}. 
Letting $\ambSpace \coloneqq \S^n$, $\stdCone \coloneqq \PSDcone{n}$ and $\stdAff \coloneqq C + \mathcal{L}$, where $\mathcal{L}$ is the span of the $A^i$ in \ref{eq:d}, the problem \eqref{eq:feas} correspond to the feasible ``slacks'' associated to dual problem \ref{eq:d}. Then, \ref{eq:d} has singularity degree one if and only if there exists  a nonzero $X \in \PSDcone{n}$,  with $C\bullet X = 0$ and $A^i \bullet X = 0$ for all $i$ in such a way that there exists $y \in \Re^m$ satisfying
\[
C- \sum _{i=1}^m A^i y_i \in \reInt (\stdCone \cap \{X\}^\perp).
\] 
Before we move further, we need to recall a classical characterization of the faces of $\PSDcone{n}$. Namely, every face $\stdFace$ of $\PSDcone{n}$ is linearly isomorphic to a smaller positive semidefinite cone. Furthermore, there exists $r \leq n$ and a nonsingular matrix $V$ such that 
\begin{equation}\label{eq:faces}
V\stdFace V^T = \left\{ X \in \S^n \mid X = \begin{pmatrix}
Y & 0 \\
0 & 0
\end{pmatrix}, Y \in \PSDcone{r} \right\},
\end{equation}
e.g., see \cite{pataki_handbook} and also \cite[Section~6]{BC75}.

\medskip

Now we are ready to state the following proposition which plays a fundamental role in our analysis.  

\begin{proposition}\label{prop:sd1}
Suppose that \ref{eq:d} is feasible. Then the following statements hold.
\begin{enumerate}
	\item After appropriate rescaling\footnote{Rescaling \ref{eq:d} corresponds to selecting a  nonsingular matrix $V$ and replacing the $C$ and the $A^i$ with $VCV^T$ and $VA^iV^T$. This transformation preserves the set of $y$ that are feasible for \ref{eq:d} and also preserves the presence (or absence) of duality gaps. For the purposes of this paper, there is no difference between analyzing a problem or its rescalings.
	}, there exists $r \leq n$ such that 
	defining $L(y)\coloneqq C -\sum_{i=1}^m A^i y_i$, the following items hold:
	\begin{enumerate}[$(a)$]
		\item 	$L(y) \in \PSDcone{n}$ if and only if 
		\[
		L(y) \in  \left\{ X \in \S^n \mid X = \begin{pmatrix}
		Y & 0 \\
		0 & 0
		\end{pmatrix}, Y \in \PSDcone{r} \right\}.
		\]
		\item (Slater's condition is satisfied for the reduced problem) There exists $y \in \Re^m $ such that $L(y) = \begin{pmatrix}
		Y & 0 \\
		0 & 0
		\end{pmatrix}$, $Y \in \reInt(\PSDcone{r})$ (i.e., $Y \succ 0$).
	\end{enumerate}
	In particular, defining $L_{11}:\Re^m \to \S^r, L_{12}:\Re^m \to \Re^{r\times (n-r)}, L_{22}: \Re^m \to \S^{n-r}$ so that 
	\[
	L(y) = \begin{pmatrix}
	L_{11}(y) & L_{12}(y)\\ L_{12}^T(y) & L_{22}(y)
	\end{pmatrix}, \qquad \forall y \in \Re^m,
	\]
 \ref{eq:d} is equivalent to the following strongly feasible problem
	\[
	{\bf RD}\ \ \ 
	\max_{y}\ b^T y,\ {\rm{s.t.}} \ L_{11}(y)\succeq 0,  \ L_{12}(y) =0, \ L_{22}(y) =0
	\]
	and the optimal value of {\bf RD} is equal to $v({\bf D})$. 
	\item Under the setting of item 1, 
	let 
	\begin{align*}
		{\cal T}&\coloneqq\left\{ 
		tI=t \begin{pmatrix}  I_{11} & 0 \\ 0 &  I_{22}\end{pmatrix}
		 \in \S^n \,\middle|\, t\in \Re\right\},\\ 
	{\cal L} &\coloneqq \left\{
	\left(\begin{array}{cc} 0  & L_{12}(\hat y) \\ L_{12}(\hat y)^T & L_{22}(\hat y) \end{array}\right)\in \S^n \,\middle|\, \hat y\in \Re^m\right\},
	\end{align*}
	where $I_{11}\in \S^r$ and $I_{22}\in \S^{n-r}$ are identity matrices.  
	Note that ${\cal L}$ is a linear space because it contains the zero matrix (see {\bf RD}).
	We take ${\cal T}+{\cal L}={\cal T}\oplus{\cal L}$ as perturbation space, and 
	consider the following SDP 
	\[
	{\bf RD}(S)\ \ \ 
	\max_{y}\ b^T y,\ {\rm{s.t.}} \ L_{11}(y)+s_{11} I _{11} \succeq 0,  \ L_{12}(y) =S_{12}, \ L_{22}(y)+s_{11}I_{22} =S_{22}
	\]
	obtained by adding the perturbation 
	\[
	S=
	\begin{pmatrix}
	S_{11} & S_{12} \\ S_{12}^T & S_{22}
	\end{pmatrix}
	=s_{11} I + \begin{pmatrix}
	0 & S_{12} \\ S_{12}^T & S_{22}-s_{11}I_{22}
	\end{pmatrix}
      \in  {\cal T}\oplus {\cal L}
	\] 
	to {\bf RD},
	where $S_{11} \in \S^r$, $S_{22} \in \S^{n-r}$ and $S_{12} \in \Re^{r\times (n-r)}$, and 
	$s_{11}\in \Re $ is the $(1,1)$-element of $S$.
	We note that $S$ is decomposed
	as the sum of $s_{11}I\in {\cal T}$ and $S-s_{11}I\in {\cal L}$, which implies that $L_{22}(\hat y)=S_{22}-s_{11}I_{22}$ holds for some $\hat y\in \Re^m$.
	
	\ \ 
	Let $w(S)$ be the optimal value function of the perturbed system\footnote{Compared with {\bf RD}, the first constraint of ${\bf RD}(S)$ is perturbed by $S_{11}=s_{11}I_{11}$, the second is shifted by $S_{12}$ from $0$, and the third is shifted by $S_{22}-s_{11}I_{22}$ from $0$.}  ${\bf RD}(S)$.  
	Then, {$w(0)=v({\bf D})$} and $w(S)$ is continuous at $S=0$.  
%
	
	\item Under the setting of item~1, if the singularity degree of \ref{eq:d} is one, then there exists 
	a nonzero $X \in \PSDcone{n}$ of rank $n-r$ satisfying
		\begin{equation}\label{bdness}
	C \bullet X = 0,\ \   
	A^i \bullet X  = 0,\ i=0,1, \ldots,m, \ \ X\succeq 0
	\end{equation}
	with the form
	\begin{equation}\label{fr_dir}
	X = \left(\begin{array}{cc} 0 & 0 \\ 0 &  X_{22}\end{array}\right), \ \ \ X_{22} \succ 0.
	\end{equation}
	
\end{enumerate}
	
\end{proposition}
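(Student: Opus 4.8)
\emph{Item 1.} I would run facial reduction on the feasibility system defining the dual slacks, i.e.\ on $\stdAff\cap\PSDcone{n}$ with $\stdAff\coloneqq C+\mathrm{span}\{A^{1},\dots,A^{m}\}$. Feasibility of \ref{eq:d} makes this set nonempty, so it has a unique minimal face $\stdFace$ of $\PSDcone{n}$ with $\stdAff\cap\PSDcone{n}\subseteq\stdFace$ and $\stdAff\cap\reInt\stdFace\neq\emptyset$. By the face characterization \eqref{eq:faces}, pick $r\le n$ and nonsingular $V$ so that $V\stdFace V^{T}$ is the block cone $\left\{\begin{pmatrix}Y&0\\0&0\end{pmatrix}:Y\in\PSDcone{r}\right\}$, and rescale \ref{eq:d} by the congruence $C\mapsto VCV^{T}$, $A^{i}\mapsto VA^{i}V^{T}$; this maps feasible slacks to feasible slacks and preserves the feasible $y$'s and the duality gap, so after it $\stdFace$ is the block cone. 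Then $(a)$ holds (a feasible slack $L(y)$ lies in the minimal face, and conversely a matrix of that block shape is positive semidefinite) and $(b)$ is exactly $\stdAff\cap\reInt\stdFace\neq\emptyset$; reading off blocks, $\{y:L(y)\succeq0\}=\{y:L_{11}(y)\succeq0,\ L_{12}(y)=0,\ L_{22}(y)=0\}$ is the feasible set of $\mathbf{RD}$, and equality of objectives gives $v(\mathbf{RD})=v(\mathbf{D})$.

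\emph{Item 2.} Here $w(0)=v(\mathbf{RD})=v(\mathbf{D})$ is immediate from Item~1, so the point is continuity of $w$ at $S=0$, which I would establish by two semicontinuity estimates. For the lower estimate: take $\bar y$ feasible for $\mathbf{RD}$ with $b^{T}\bar y$ close to $v(\mathbf{D})$ and, replacing $\bar y$ by a convex combination with the strictly feasible point $\tilde y$ furnished in $(b)$, assume $L_{11}(\bar y)\succ0$ while $L_{12}(\bar y)=L_{22}(\bar y)=0$; since $S\in{\cal T}\oplus{\cal L}$, the pair $(S_{12},S_{22}-s_{11}I_{22})=(L_{12}(\hat y),L_{22}(\hat y))$ for some $\hat y$ and hence lies in the range of the linear part of $y\mapsto(L_{12}(y),L_{22}(y))$, so one can solve with minimum norm for $\delta(S)\to0$ making $y(S)\coloneqq\bar y+\delta(S)$ satisfy the two equality constraints of $\mathbf{RD}(S)$, and then $L_{11}(y(S))+s_{11}I_{11}=L_{11}(\bar y)+o(1)\succ0$ for small $S$, so $w(S)\ge b^{T}y(S)\to b^{T}\bar y$; letting $b^{T}\bar y\uparrow v(\mathbf{D})$ gives $\liminf_{S\to0}w(S)\ge v(\mathbf{D})$. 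For the upper estimate: $\mathbf{RD}$ satisfies Slater's condition, so strong duality holds and its dual is attained; since the $A^{i}$-blocks and the objective $b^{T}y$ are unchanged, the dual feasibility conditions of $\mathbf{RD}(S)$ coincide with those of $\mathbf{RD}$, so a dual optimal solution of $\mathbf{RD}$ is dual feasible for $\mathbf{RD}(S)$, its dual objective there differs from $v(\mathbf{D})$ by a quantity linear (hence continuous) in $S$, and weak duality yields $\limsup_{S\to0}w(S)\le v(\mathbf{D})$.

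\emph{Item 3.} I would start from the characterization of ``singularity degree one for \ref{eq:d}'' recorded just before the proposition, which gives a nonzero $X\in\PSDcone{n}$ with $C\bullet X=0$, $A^{i}\bullet X=0$ for all $i$, and some $y$ with $L(y)\in\reInt(\PSDcone{n}\cap\{X\}^{\perp})$. Every feasible slack $L(y')\succeq0$ satisfies $X\bullet L(y')=0$ (as $X$ is orthogonal to $C$ and the $A^{i}$), so $\stdAff\cap\PSDcone{n}\subseteq\PSDcone{n}\cap\{X\}^{\perp}$; together with $L(y)\in\reInt(\PSDcone{n}\cap\{X\}^{\perp})$ and uniqueness of the minimal face this forces $\PSDcone{n}\cap\{X\}^{\perp}=\stdFace$. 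Since for $Z\in\PSDcone{n}$ one has $X\bullet Z=0\iff XZ=0\iff\R(Z)\subseteq\Ker X$, we get $\PSDcone{n}\cap\{X\}^{\perp}=\{Z\succeq0:\R(Z)\subseteq\Ker X\}$, while Item~1 gives $\stdFace=\{Z\succeq0:\R(Z)\subseteq\mathrm{span}(e_{1},\dots,e_{r})\}$; a face of this kind determines its subspace, so $\Ker X=\mathrm{span}(e_{1},\dots,e_{r})$. Hence the first $r$ rows and columns of $X$ vanish, $X=\begin{pmatrix}0&0\\0&X_{22}\end{pmatrix}$ with $X_{22}\in\S^{n-r}$ positive semidefinite and injective, i.e.\ $X_{22}\succ0$ and $\rank X=n-r$, and the relations \eqref{bdness} are exactly those assumed on $X$.

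The bulk of the work will be Item~2: Items~1 and 3 reduce to the face characterization \eqref{eq:faces} together with the elementary identity $X\bullet Z=0\iff XZ=0$ for positive semidefinite $X,Z$, whereas in Item~2 the delicate point is keeping $\mathbf{RD}(S)$ feasible under the \emph{structured} perturbation for \emph{every} direction $S\to0$ (notably $s_{11}<0$), which is exactly why the perturbation space ${\cal T}\oplus{\cal L}$ is chosen as it is and why it pays to first move the base point toward the Slater point before estimating.
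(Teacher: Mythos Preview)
Your arguments for Items~1 and~3 match the paper's own (brief) proof: both items are reduced to the face characterization~\eqref{eq:faces} applied to the minimal face, together with the identification $\PSDcone{n}\cap\{X\}^{\perp}=\stdFace$ when the singularity degree is one. Your Item~3 is in fact more explicit than the paper's one-line sketch, spelling out why $\Ker X=\mathrm{span}(e_{1},\dots,e_{r})$ via the equivalence $X\bullet Z=0\iff XZ=0$; this is fine and correct.

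For Item~2 you take a genuinely different route. The paper (in its appendix) performs a change of basis $\{A^{i}\}\to\{B^{i}\}$ adapted to the subspace ${\cal W}=\{\sum A^{i}y_{i}:W_{12}=0,\ W_{22}=0\}$, shows that the map $\tau(z_{k+1},\dots,z_{m})=\pi\!\left(\sum_{i>k}B^{i}z_{i}\right)$ is a bijection onto ${\cal L}$, and thereby \emph{eliminates} the equality constraints of $\mathbf{RD}(S)$ by solving them uniquely for $(z_{k+1},\dots,z_{m})$ as a linear function of $S$; what remains is a strongly feasible SDP in $(z_{1},\dots,z_{k})$ whose only dependence on $S$ is a linear perturbation of the constant term of the constraint, and the paper then invokes a standard stability theorem (Theorem~4.1.9 of the SDP handbook). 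You instead prove continuity directly by two semicontinuity estimates: the lower one constructs a feasible point $y(S)=\bar y+\delta(S)$ using the Slater point and the fact that $S_{\cal L}\in{\cal L}$ lies in the range of the linear part of $(L_{12},L_{22})$, and the upper one exploits that the dual feasible set of $\mathbf{RD}(S)$ does not depend on $S$, so a dual optimizer of $\mathbf{RD}$ (attained because $\mathbf{RD}$ has a Slater point and, in the paper's setting, finite value) certifies $w(S)\le v(\mathbf{D})+O(\norm{S})$ by weak duality. Your approach is self-contained and avoids the external citation; the paper's approach is more structural and makes transparent that, after eliminating the equalities, $\mathbf{RD}(S)$ is just a right-hand-side perturbation of a Slater-regular SDP. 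One small caveat: your upper estimate needs $v(\mathbf{D})<+\infty$ to ensure dual attainment, which holds throughout the paper (both \textbf{P} and \textbf{D} are feasible) but is not literally part of the proposition's hypotheses.
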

\begin{proof}
Items 1 and 3 are an amalgam of well-known results about $\PSDcone{n}$ and minimal faces, so we only present a sketch of the proof.
Let $\stdAff \coloneqq C + \mathcal{L}$, where $\mathcal{L}$ is the span of the $A^i$ in \ref{eq:d}.
Let $\stdFace$ the minimal face of $\PSDcone{n}$ containing 
$\stdAff \cap \PSDcone{n}$. As discussed previously, 
such a face  must be as in \eqref{eq:faces} and have the property
that $(\stdAff \cap \PSDcone{n}) \subseteq  \stdFace$ and $\stdAff \cap (\reInt \stdFace) \neq \emptyset$. Rescaling the $C$ and $A^i$ using $V$ leads to the proof of item~$1$.

%
{
}

As for item~$3$, if the singularity degree is $1$, 
then $\stdFace = \stdCone \cap \{X\}^\perp$, where $X\in \PSDcone{n}\cap \stdAff^\perp$ is nonzero and satisfies  $\stdAff \cap (\reInt (\PSDcone{n} \cap \{X\}^\perp))$. In  view of \eqref{eq:faces},  $X$ has rank $n-r$ and satisfies 
\eqref{bdness} and \eqref{fr_dir}.

It remains to prove item~2. Since $w(0)$ corresponds to the unperturbed problem, we have $w(0) = v(\ref{eq:d})$.
Then, since \textbf{RD} satisfies Slater's condition, continuity of $w(S)$ at $S=0$ in item 2 is obtained as 
a consequence of 
Theorem 4.1.9 of \cite{sdp_handbook}.  For the sake of completeness, we describe a detailed proof in Appendix~\ref{app:proof}.%
\end{proof}

\medskip
\noindent
We note that
even though Proposition~\ref{prop:sd1} is presented for problems in dual format,  a completely analogous discussion 
can be done for \ref{eq:p} if we let $\stdCone \coloneqq \PSDcone{n}$, $\stdAff \coloneqq \{X \in \S^n \mid A^{i} \bullet X = b_i, i = 1,\ldots,m \}$ in \eqref{eq:feas}.

\section{Main Result and Proof}\label{sec:main}

In this section, we show continuity 
of the limiting pd-regularized optimal value function $\av$ defined in \eqref{eq:av} at $\theta =0$ and $\theta = \pi/2$ under the assumption 
that the singularity degree of {\bf P} and {\bf D} is one. We also show bijectivity of $\av$.
More precisely, we prove the following.
\begin{theorem}\label{theo:main}
	Suppose that {\bf P} and {\bf D} are feasible.
	Then the following statements hold on the limiting pd-regularized optimal value function $\av:[0,\pi/2]\to \Re$.
	\begin{enumerate}
		\item If the singularity degree of {\bf D} is one, 
		then $\av(\theta)$ is continuous at $\theta=\pi/2$.
		\item If the singularity degree of {\bf P} is one,
		then $\av(\theta)$ is continuous at $\theta=0$.
		\item 
		If the singularity degree of both {\bf P} and {\bf D} is one,
		then $\av(\theta)$ is continuous at $\theta=0$ and $\theta=\pi/2$.
		Furthermore, $\av$ is a monotonically decreasing continuous bijective function from $[0,\pi/2]$ to $[v({\bf D}),v({\bf P})]$.
	\end{enumerate}
\end{theorem}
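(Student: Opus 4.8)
\medskip

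The plan is to prove Items~1 and~2 first --- they are mirror images (swap {\bf P} and {\bf D}, the endpoints $\theta=0$ and $\theta=\pi/2$, and the parameters $\varepsilon$ and $\eta$), so I only describe Item~1; Item~3 is then a short deduction. For Item~1, Theorem~\ref{thm:tlmo} already gives $\av(\theta)\ge\av(\pi/2)=v({\bf D})$ for all $\theta$ and the existence of $\lim_{\theta\uparrow\pi/2}\av(\theta)$, so continuity at $\pi/2$ reduces to the one-sided bound $\limsup_{\theta\uparrow\pi/2}\av(\theta)\le v({\bf D})$. Since ${\bf P}(\varepsilon,\eta)$ and ${\bf D}(\varepsilon,\eta)$ share the value $v(\varepsilon,\eta)$ for $\varepsilon,\eta>0$, any $X\succeq0$ feasible for ${\bf P}(\varepsilon,\eta)$ gives $v(\varepsilon,\eta)\le (C+\varepsilon I)\bullet X$; the whole task is to produce, for $\theta$ close to $\pi/2$, such an $X$ with $(C+\varepsilon I)\bullet X$ as close as we wish to $v({\bf D})$.

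I would build these points from Proposition~\ref{prop:sd1} applied to {\bf D} (singularity degree one). After the rescaling of that proposition, {\bf D} becomes the strongly feasible {\bf RD}; dualizing {\bf RD} gives a problem {\bf RP} --- a minimization over symmetric matrices $\widetilde X$ in which only the leading $r\times r$ block is required positive semidefinite, the remaining blocks being free, with linear constraints $\widetilde X \mapsto b$ --- and $v({\bf RP})=v({\bf RD})=v({\bf D})$ by Slater's condition, so {\bf RP} has an optimal sequence $\{\widetilde X^k\}$. Undoing the rescaling (which preserves $A^i\bullet(\cdot)$ and $C\bullet(\cdot)$) yields $X^k$ with $A^i\bullet X^k=b_i$ and $C\bullet X^k\to v({\bf D})$, although $X^k\not\succeq0$ in general --- only the block playing the role of the leading block is controlled. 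I would also take the reducing direction $\bar X$ of Proposition~\ref{prop:sd1}(3): $\bar X\succeq0$, $C\bullet\bar X=0$, $A^i\bullet\bar X=0$, of rank $n-r$; in the rescaled coordinates $\bar X$ is zero on the leading $r\times r$ block and positive \emph{definite} on the trailing $(n-r)\times(n-r)$ block (this is the one place singularity degree one is used).

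The candidate is $X_{k,\lambda,\eta}:=X^k+\lambda\bar X+\eta I$ with $\lambda>0$ large. It satisfies the perturbed affine constraints of ${\bf P}(\varepsilon,\eta)$ because $A^i\bullet(\lambda\bar X)=0$ and $A^i\bullet(\eta I)=\eta(A^i\bullet I)$, and --- by a Schur-complement computation in the rescaled coordinates, using that $\bar X$ is positive definite on the trailing block --- it is positive semidefinite as soon as $\lambda\ge c_k/\eta$, where $c_k$ depends only on $k$ and the data. Its objective value equals $C\bullet X^k+\lambda\varepsilon\operatorname{tr}(\bar X)+O(\varepsilon)+O(\eta)$, using $C\bullet\bar X=0$. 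Substituting $\varepsilon=t\cos\theta$, $\eta=t\sin\theta$, $\lambda=c_k/(t\sin\theta)$ and letting $t\downarrow0$ with $\theta$ fixed, the $O(\varepsilon),O(\eta)$ terms vanish while $\lambda\varepsilon\operatorname{tr}(\bar X)=c_k\operatorname{tr}(\bar X)\cot\theta$; hence $\av(\theta)\le C\bullet X^k+c_k\operatorname{tr}(\bar X)\cot\theta$ for every $\theta<\pi/2$ and every $k$. Sending $\theta\uparrow\pi/2$ (so $\cot\theta\to0$) and then $k\to\infty$ (so $C\bullet X^k\to v({\bf D})$) yields $\limsup_{\theta\uparrow\pi/2}\av(\theta)\le v({\bf D})$, which is Item~1. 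Item~2 is the mirror statement: $v(\varepsilon,\eta)=v({\bf D}(\varepsilon,\eta))$ is now a maximum, so one lower-bounds it by dual-feasible points of ${\bf D}(\varepsilon,\eta)$ built from the reduction of {\bf P} (which exists, as noted after Proposition~\ref{prop:sd1}), in the regime $\tan\theta\to0$.

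Item~3 then follows: combined with Theorem~\ref{thm:tlmo}, Items~1 and~2 make $\av$ continuous and monotone decreasing on all of $[0,\pi/2]$, so $\av([0,\pi/2])=[\av(\pi/2),\av(0)]=[v({\bf D}),v({\bf P})]$ by the intermediate value theorem; strict monotonicity of $\av$ then upgrades this surjection to the claimed bijection. I expect the crux to be the positive-semidefiniteness repair together with its rate: one must show that the amount $\lambda$ of reducing direction needed to correct $X^k+\eta I$ grows no faster than $1/\eta$, so that the ``repair cost'' $\lambda\varepsilon\operatorname{tr}(\bar X)$ stays $O(\varepsilon/\eta)=O(\cot\theta)$ and is annihilated as $\theta\to\pi/2$. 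Full rank of the reducing direction on the complementary block --- i.e.\ singularity degree one --- is precisely what makes the repair possible at rate $1/\eta$; at singularity degree two or more the reducing direction degenerates there, the argument collapses, and $\av$ can genuinely be discontinuous, matching the instance of Section~\ref{sec:counter_ex}.
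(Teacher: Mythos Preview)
Your argument for Items~1 and~2 is correct and takes a genuinely different route from the paper's. The paper works on the dual side: it analyzes ${\bf D}(t\alpha,t)$ directly, uses the reducing direction to bound $L_{22}(y)+t\alpha I_{22}$ (Lemma~\ref{bound}), passes through auxiliary problems ${\bf RD1}$ and ${\bf RD2}$, and finishes with a contradiction against the continuity of the perturbed optimal value $w(S)$ from item~2 of Proposition~\ref{prop:sd1}. Your approach is primal and constructive: you dualize ${\bf RD}$ to obtain near-optimal matrices $X^k$ with $A^i\bullet X^k=b_i$, $C\bullet X^k\to v({\bf D})$, and $X^k_{11}\succeq 0$; then you repair positive-semidefiniteness by adding $\lambda\bar X+\eta I$ with $\lambda\sim c_k/\eta$, the full rank of $\bar X_{22}$ being precisely what permits the Schur-complement repair at rate $1/\eta$. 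This is more elementary --- it avoids the sensitivity result proved in the Appendix and the contradiction argument --- and it makes the quantitative dependence on $\cot\theta$ explicit. The paper's route, in exchange, isolates the stability of $w(S)$ as a standalone fact that may be of independent interest.

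Your Item~3, however, has a gap. You write that ``strict monotonicity of $\av$ then upgrades this surjection to the claimed bijection,'' but Theorem~\ref{thm:tlmo} gives only \emph{monotone} decreasing, not strictly so, and nothing in your Items~1--2 upgrades this. A continuous monotone function hitting both endpoints is surjective onto $[v({\bf D}),v({\bf P})]$ by the intermediate value theorem, but it could still be flat on a subinterval and hence fail to be injective. The paper supplies this missing piece separately in the proof of item~3 via Proposition~\ref{barv}: $\tilde v$ is convex and $\bar v$ is concave on $\Re_+$, and a short case analysis shows that any plateau of $\bar v$ would force $\bar v$ (or $\tilde v$) to be globally constant, contradicting $\bar v(0)=v({\bf D})<v({\bf P})=\bar v(\infty)$ together with the endpoint continuity just established. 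You need to add this (or an equivalent) argument to obtain bijectivity.
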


In \cite{TLMO2019}, we introduced the function
$\tilde v: \Re_+ \cup \{\infty\} \to \Re$ as 
\begin{equation}\label{aaa1}
\tilde v(\beta) \coloneqq \lim_{t\downarrow 0} v(t, t\beta)\ (0 \leq \beta < \infty),\ \tilde v(\infty)\coloneqq\lim_{t\downarrow 0} v(0,t).
\end{equation}
In what follows, we define the function $\bar v: \Re_+ \cup \{\infty\} \to \Re$ given by
\begin{equation}\label{bbb1}
\bar v(\alpha) \coloneqq\lim_{t\downarrow 0} v(t\alpha,t)\ (0 \leq \alpha < \infty),\ \ \bar v(\infty)\coloneqq\lim_{t\downarrow 0} v(t,0).
\end{equation}
The existence of the limit in \eqref{aaa1} and \eqref{bbb1} is shown  in \cite[Theorem 1]{TLMO2019}.
We see, under the convention that $1/0 = \infty$ and $1/\infty=0$, that
\begin{equation}\label{eq:bartilde}
\bar v(\alpha) = \tilde v \left(\frac{1}{\alpha}\right),\ \ \  \ \tilde v(\beta) = \bar v \left(\frac{1}{\beta}\right). 
\end{equation}
It also follows that
\begin{equation}\label{eq:av_bar_v}
\av(\theta)=\lim_{t\downarrow 0} v(t\cos\theta,t\sin\theta) =\tilde v(\tan\theta)=\bar v(\cot\theta).
\end{equation}
We provide the following proposition on a few fundamental
properties of $\tilde v$ and $\bar v$. 

\begin{proposition}\label{barv}
	The following items hold.
\begin{enumerate}
\item 
$\tilde v(\beta)$ is monotone decreasing on $\Re_+\cup\{\infty\}$, convex on $\Re_+$ and 
continuous on $\Re_{+}\backslash\{0\}$.
\item 
$\bar v(\alpha)$ is monotone increasing on $\Re_+\cup\{\infty\}$, concave on $\Re_+$ and 
continuous on $\Re_{+}\backslash\{0\}$.
\end{enumerate}
\end{proposition}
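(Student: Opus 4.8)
The plan is to obtain all six assertions from results already established in the paper, only the convexity/concavity claims requiring a new (but elementary) argument. For the monotonicity and the continuity on $\Re_+\setminus\{0\}$, I would combine Theorem~\ref{thm:tlmo} with the identity \eqref{eq:av_bar_v}: we have $\tilde v(\beta)=\av(\arctan\beta)$ for $\beta\in\Re_+$ and $\tilde v(\infty)=\av(\pi/2)$, while $\bar v(\alpha)=\av(\arctan(1/\alpha))$ for $\alpha\in\Re_+$ (under the convention $1/0=\infty$) and $\bar v(\infty)=\av(0)$. Since $\arctan$ is an increasing homeomorphism of $[0,\infty)$ onto $[0,\pi/2)$ and $\alpha\mapsto\arctan(1/\alpha)$ is a decreasing homeomorphism of $(0,\infty)$ onto $(0,\pi/2)$, and since $\av$ is monotone decreasing on $[0,\pi/2]$ and continuous on $(0,\pi/2)$ by Theorem~\ref{thm:tlmo}, it follows at once that $\tilde v$ is monotone decreasing and continuous on $\Re_+\setminus\{0\}$, that $\bar v$ is monotone increasing and continuous on $\Re_+\setminus\{0\}$, and that $\tilde v(\beta)\geq\av(\pi/2)=\tilde v(\infty)$ and $\bar v(\alpha)\leq\av(0)=\bar v(\infty)$ for every finite argument, which extends the monotonicity to all of $\Re_+\cup\{\infty\}$.

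For convexity of $\tilde v$, I would fix $t>0$ and show that $\beta\mapsto v(t,t\beta)$ is convex on $\Re_+$, then let $t\downarrow 0$. Since the perturbed problems ${\bf P}(\varepsilon,\eta)$ and ${\bf D}(\varepsilon,\eta)$ have no duality gap whenever $\varepsilon>0$ (as recalled in the introduction), $v(t,t\beta)$ equals the optimal value of ${\bf P}(t,t\beta)$; there the objective $(C+tI)\bullet X$ is independent of $\beta$, while the right-hand sides $b_i+t\beta(A^i\bullet I)$ are affine in $\beta$. Hence $\beta\mapsto v(t,t\beta)$ is the value function of a conic linear program viewed as a function of its right-hand side, precomposed with an affine map, and is therefore convex (if $X_1$ is feasible for $\beta_1$ and $X_2$ for $\beta_2$, then $\lambda X_1+(1-\lambda)X_2$ is feasible for $\lambda\beta_1+(1-\lambda)\beta_2$ and the objective is linear). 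As $\tilde v(\beta)=\lim_{t\downarrow 0}v(t,t\beta)$ exists and is finite for every $\beta$ by \cite[Theorem~1]{TLMO2019}, and pointwise limits of convex functions are convex, $\tilde v$ is convex on $\Re_+$. The concavity of $\bar v$ follows from the symmetric argument with the roles of $\varepsilon$ and $\eta$ interchanged: for fixed $t>0$, $\alpha\mapsto v(t\alpha,t)$ equals the optimal value of ${\bf P}(t\alpha,t)$, whose feasible set $\{X:A^i\bullet X=b_i+t(A^i\bullet I),\,X\succeq 0\}$ does not depend on $\alpha$, while the objective $(C+t\alpha I)\bullet X=C\bullet X+t\alpha(I\bullet X)$ is affine in $\alpha$ for each fixed feasible $X$; so it is a pointwise infimum of affine functions of $\alpha$, hence concave, and letting $t\downarrow 0$ gives concavity of $\bar v$ on $\Re_+$.

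There is no serious obstacle in any of this; the statement assembles ``fundamental properties'' that follow quickly from facts proved earlier. The single point that needs a moment's attention is the convexity/concavity dichotomy: one must phrase everything through the primal problem ${\bf P}(\varepsilon,\eta)$, so that an $\eta$-perturbation appears only in the right-hand side (forcing convexity in the $\tilde v$-direction) while an $\varepsilon$-perturbation appears only in the objective (forcing concavity in the $\bar v$-direction), and one should keep track of the (harmless) fact that $v(\varepsilon,\eta)$ still coincides with the optimal value of ${\bf P}(\varepsilon,\eta)$ on the two coordinate rays, so that the limit $t\downarrow 0$ remains within the class of primal value functions.
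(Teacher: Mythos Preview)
Your argument is correct. The route differs from the paper's in two respects worth noting.

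For monotonicity and continuity, the paper does \emph{not} go through Theorem~\ref{thm:tlmo} and the reparametrization \eqref{eq:av_bar_v}. Instead it cites \cite[Theorem~4]{TLMO2019} for the monotonicity and convexity of $\tilde v$ directly, and then deduces continuity on $\Re_+\setminus\{0\}$ from convexity (a convex function is continuous on the relative interior of its domain). For $\bar v$ the paper argues monotonicity of $\alpha\mapsto v(t\alpha,t)$ from the dual side (the feasible region of ${\bf D}(t\alpha,t)$ enlarges in $y$ as $\alpha$ grows, with the objective unchanged), passes to the limit, and again derives continuity from concavity. Your approach of pulling monotonicity and continuity back from $\av$ via $\arctan$ is equally valid and arguably cleaner, since Theorem~\ref{thm:tlmo} is already available; the paper's route has the small advantage that it obtains continuity as a corollary of convexity/concavity, so the two curvature claims do double duty.

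For convexity/concavity, your argument is essentially the same mechanism the paper relies on (pre-limit convexity/concavity in the perturbation parameter for fixed $t$, then pointwise limit), but you supply a self-contained proof through the \emph{primal} problem ${\bf P}(\varepsilon,\eta)$: $\beta$ enters only the right-hand side (yielding convexity of the value function), while $\alpha$ enters only the objective (yielding concavity as an infimum of affine functions). The paper instead cites \cite[Proposition~2, item~2]{TLMO2019} for the concavity of $\alpha\mapsto v(t\alpha,t)$ and \cite[Theorem~4]{TLMO2019} for $\tilde v$. Your primal-side derivation is a nice uniform treatment; just be sure to make explicit, as you do in your closing caveat, that at $\alpha=0$ one needs strong feasibility of ${\bf P}(0,t)$ (not of ${\bf D}$) to identify $v(0,t)$ with the primal optimal value.
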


\begin{proof}
Monotonicity and convexity of $\tilde v$ is given in  \cite[Theorem 4]{TLMO2019}.
Given the convexity of $\tilde v$ on $\Re_+$,  
continuity of $\tilde v(\alpha)$ on $\Re_+\backslash\{0\}$ follows from the well-known fact 
that  a convex function is continuous over the 
relative interior of its domain, e.g., \cite[Theorem~10.1]{rockafellar}.
Item~2 is the dual counterpart to item 1, and follows in a similar manner as outlined below.
It was shown in  \cite[item~2 of Proposition 2]{TLMO2019} that for any fixed $t>0$,
$v(t\alpha, t)$ is a concave function in $\alpha$. Furthermore, since the feasible region of ${\bf D}(t\alpha, t)$
gets larger as $\alpha$ increases (with respect to the $y$ variable) while the objective function is unchanged, $v(t\alpha, t)$ is
a monotone increasing function in $\alpha$ for any fixed $t>0$.
Thus, $v(t\alpha, t)$ is monotone increasing and concave in $\alpha$ for any fixed $t>0$.
Since $\bar v(\alpha)=\lim_{t\rightarrow 0} v(t\alpha, t)$, 
monotonicity and concavity of $\bar v$ is proved by taking the limit as $t\rightarrow 0$
in the similar manner as was done in the proof of monotonicity and convexity of $\tilde v$ in 
\cite[Theorem 4]{TLMO2019}.
\end{proof}


\medskip

Now we are ready to prove Theorem~\ref{theo:main}.   
Before proceeding to the proof, we comment on
the linear independence of the $A^1, \ldots, A^m$.  Theorem~\ref{theo:main} itself does not require the assumption of linear independence  of $A^1, \ldots, A^m$ to hold.  
Nevertheless, we assume linear independence of $A^1, \ldots, A^m$ in some parts of the 
proof.  We remark that this is not an essential assumption and it  avoids certain unnecessary complications.
Indeed, even if $A^1, \ldots, A^m$ are not linearly independent, we can choose a subset $\{A^{i_1}, \ldots, A^{i_p}\}$, say,
as a basis of the linear space spanned by $A^1, \ldots, A^m$.  The pd-regularized
optimal value function $v(\varepsilon, \eta)$ remains unchanged no matter whether $\{A^1, \ldots, A^m\}$ or
$\{A^{i_1}, \ldots, A^{i_p}\}$ is used for representing the SDP under consideration.


\subsection{Proof of item~1.}
First we prove item~1 of Theorem~\ref{theo:main}.
We start by recalling our main assumption for this proof.
\medskip

{\em We assume feasibility of {\bf P} and
	{\bf D}, and assume that the singularity degree of {\bf D} is one.}

\medskip
Furthermore, we assume that the problem data has been rescaled as in item~1 of Proposition~\ref{prop:sd1}. And, since the singularity degree is assumed to be one, we can further assume that we are in the setting of item~3 of Proposition~\ref{prop:sd1}.
With that, by definition, $v(t\alpha, t)$ is the optimal value of the following problem,
\begin{equation}\label{eq:dtat}
{\bf D}(t\alpha, t) \ \ \ 
\max\ b^T y -t(l(y)-c)\ {{\rm{s.t.}}}\ L(y)+t\alpha I \succeq 0,
\end{equation}
where we define 
$$l(y)\coloneqq L(y)\bullet I,\ c\coloneqq C\bullet I.$$
In what follows we will use the following convention, given an arbitrary $Z \in \S^n$, we will use $Z_{11}, Z_{22}$ and $Z_{12}$ to denote the blocks of $Z$  according to the block division in Proposition~\ref{prop:sd1}, so that $Z = \begin{pmatrix}
Z_{11} & Z_{12} \\ Z_{12}^T & Z_{22}
\end{pmatrix}$ with $Z_{11} \in \S^r, Z_{22} \in \S^{n-r}, Z_{12} \in \Re^{r\times (n-r)}$.

We start the proof with the following two preliminary lemmas.
\begin{lemma} \label{l3} \ 
	The following statements hold:
	\begin{enumerate}
		\item  $v({\bf D}) \leq \bar v(\alpha) \leq v({\bf P})$.
		\item For $t >0$, $v(t,0)$ is finite and $\lim_{t\downarrow 0}v(t,0)=\bar v(\infty)  = v({\bf P})$.
		\item For $t >0$, $v(0,t)$ is finite and $\lim_{t\downarrow 0}v(0,t)=\bar v(0)  = v({\bf D})$.
	\end{enumerate}
\end{lemma}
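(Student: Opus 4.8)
The plan is to prove the three claims of Lemma~\ref{l3} by exploiting the regularized description of \textbf{D} given in item~3 of Proposition~\ref{prop:sd1}, together with the monotonicity/continuity facts for $\bar v$ from Proposition~\ref{barv} and the sandwiching coming from the fact that $\bar v(\alpha) = \av(\arctan(1/\alpha))$ lies between $v(\textbf{D})$ and $v(\textbf{P})$ by Theorem~\ref{thm:tlmo}. Let me describe each part.

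\textbf{Item 1.} Since $\bar v(\alpha) = \av(\theta)$ with $\cot\theta = \alpha$ (see \eqref{eq:av_bar_v}), this is essentially a restatement of what we already know: by Theorem~\ref{thm:tlmo}, $\av$ is monotone decreasing on $[0,\pi/2]$ with $\av(0) = v(\textbf{P})$ and $\av(\pi/2) = v(\textbf{D})$, so $v(\textbf{D}) \le \av(\theta) \le v(\textbf{P})$ for all $\theta\in[0,\pi/2]$, hence $v(\textbf{D}) \le \bar v(\alpha) \le v(\textbf{P})$ for all $\alpha\in\Re_+\cup\{\infty\}$. (One should double-check the boundary cases $\alpha=0$ and $\alpha=\infty$, but these correspond to $\theta=\pi/2$ and $\theta=0$ and are covered by item~1 of Theorem~\ref{thm:tlmo}.)

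\textbf{Item 2.} Here I want to identify $v(t,0)$ — the optimal value of $\textbf{D}(0,t)$ in the original unrescaled notation, i.e.\ $\textbf{P}(t,0)$ from the primal side — and show it converges to $v(\textbf{P})$ as $t\downarrow 0$. First I would note that $\textbf{P}(t,0)$ has the same feasible set as \textbf{P} (only the objective $C \bullet X$ is shifted to $(C+tI)\bullet X$), and since $v(t,0) = v(\textbf{P}(t,0))$ and strong duality holds, $v(t,0)$ is finite (it equals $v(\textbf{D}(t,0))$, and feasibility of \textbf{D} plus $\varepsilon = t > 0$ gives a Slater point for $\textbf{D}(t,0)$, so both are finite and equal). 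For the limit: the map $t\mapsto v(\textbf{P}(t,0))$ is convex (infimum-of-affine-in-$t$ over the feasible set) and monotone, so it has a limit as $t\downarrow 0$; that limit is $\bar v(\infty)$ by definition \eqref{bbb1}. To see the limit equals $v(\textbf{P})$, combine the bound from item~1 ($\bar v(\infty) \le v(\textbf{P})$) with a reverse inequality: take any optimal sequence $\{X^k\}$ for \textbf{P}, feasible for each $\textbf{P}(t,0)$, giving $v(\textbf{P}(t,0)) \le (C+tI)\bullet X^k = C\bullet X^k + t(I\bullet X^k)$; being careful here because $I\bullet X^k$ may be unbounded, so one instead argues via lower semicontinuity / the directional-limit result of \cite[Theorem 1]{TLMO2019} that $\bar v(\infty) = v(\textbf{P})$ — indeed this identity $\bar v(\infty) = \tilde v(0) = v(\textbf{P})$ is exactly item~1 of Theorem~\ref{thm:tlmo} read through \eqref{eq:av_bar_v}, since $\bar v(\infty) = \av(0) = v(\textbf{P})$.

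\textbf{Item 3.} Symmetrically, $v(0,t)$ is the optimal value of $\textbf{D}(0,t)$, which is $\textbf{D}$ with the slack constraint relaxed to $C - \sum A^i y_i + tI = S \succeq 0$; for $t>0$ this is strongly feasible on the dual side (the reduced problem \textbf{RD} is strongly feasible, and adding $tI$ gives interior points of the full cone), and by strong duality $v(0,t)$ is finite. Then $v(0,t) \to \bar v(0)$ by definition, and $\bar v(0) = \av(\pi/2) = v(\textbf{D})$ by item~1 of Theorem~\ref{thm:tlmo} via \eqref{eq:av_bar_v}. The main obstacle I anticipate is the finiteness-and-limit bookkeeping in item~2: one must be careful that $v(t,0)$ is genuinely finite (not $-\infty$) for all $t>0$ and that the monotone limit as $t\downarrow 0$ is correctly matched with $\bar v(\infty)$ and with $v(\textbf{P})$ — the cleanest route is to reduce everything to the already-established properties of $\tilde v$, $\bar v$ and $\av$ from \cite{TLMO2019} and Theorem~\ref{thm:tlmo}, rather than re-deriving convergence from scratch, and to invoke the singularity-degree-one structure only where it genuinely simplifies the slack side (ensuring $\textbf{D}(0,t)$ is well-behaved). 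I expect items~2 and~3 to be short once this identification with $\av(0)$ and $\av(\pi/2)$ is made explicit.
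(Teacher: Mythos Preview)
Your overall strategy---reduce everything to Theorem~\ref{thm:tlmo} via the identity $\bar v(\alpha)=\av(\operatorname{arccot}\alpha)$ in \eqref{eq:av_bar_v}---is exactly the paper's approach, and your final conclusions for all three items are correct.

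That said, you are making this harder than it needs to be, and there are a couple of slips in the bookkeeping. First, the singularity-degree-one hypothesis and the reduced problem \textbf{RD} from Proposition~\ref{prop:sd1} play no role here; the paper's proof uses only feasibility of \textbf{P} and \textbf{D} together with Theorem~\ref{thm:tlmo}. Second, in item~2 you write ``$v(t,0)$ --- the optimal value of $\textbf{D}(0,t)$'': this should be $\textbf{D}(t,0)$ (equivalently $\textbf{P}(t,0)$). More substantively, in item~3 you describe $\textbf{D}(0,t)$ as having the slack relaxed by $tI$ and hence strongly feasible; this is backwards. In $\textbf{D}(0,t)$ we have $\varepsilon=0$, so the feasible region in $y$ is \emph{identical} to that of \textbf{D}, and only the objective changes. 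Strong feasibility (Slater) holds on the \emph{primal} side $\textbf{P}(0,t)$ because $\eta=t>0$. The paper's finiteness argument is simply: both $\textbf{P}(t,0)$ and $\textbf{D}(t,0)$ are feasible (since \textbf{P} and \textbf{D} are), hence $v(t,0)$ is finite; then $\lim_{t\downarrow 0} v(t,0)=\av(0)=v(\textbf{P})$ directly from item~1 of Theorem~\ref{thm:tlmo}. No convexity-of-$t\mapsto v(t,0)$ or optimal-sequence detours are needed. Item~3 is handled symmetrically.
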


\begin{proof}
	Item 1 readily follows from Theorem~\ref{thm:tlmo}, since $\bar v$ is the function $\av$ with a different parametrization, see \eqref{eq:av_bar_v}.
	To prove item 2, we observe that {\bf P} and ${\bf P}(t,0)$ have the same feasible region.
	Since {\bf P} is feasible, ${\bf P}(t,0)$ is feasible and $v(t,0)<+\infty$.
	Since ${\bf D}$ is feasible, ${\bf D}(t,0)$ is also feasible.  This means that $-\infty < v(t,0)$.
	Thus, $v(t,0)$ is finite.  It follows from item 1 of Theorem~\ref{thm:tlmo} that  $\lim_{t\downarrow0} v(t,0)  = v({\bf P})$.
	Item 3 is the dual counterpart of item 2 and follows analogously. 
\end{proof}

{
	\begin{lemma} \label{bound} 
		There exists a constant $M >0 $ such that for every $t \geq 0 $ the following implication holds:
		\begin{equation} \label{bound1}
		L_{22}(y) + tI_{22} \succeq 0\quad  \Longrightarrow\quad tMI_{22}  \succeq  L_{22}(y) + tI_{22} \succeq 0.
		\end{equation}
	\end{lemma}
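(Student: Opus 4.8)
\textbf{Proof plan for Lemma~\ref{bound}.}
The plan is to exploit the fact that, after the rescaling in Proposition~\ref{prop:sd1}, the constraints of \textbf{RD} force $L_{22}(y)=0$ whenever $L_{11}(y)\succeq0$; equivalently, the linear map $y\mapsto L_{22}(y)$ sends the feasible ``slack'' cone of \ref{eq:d} to $0$. The quantity $L_{22}(y)$ therefore depends only on the component of $y$ transverse to the set of feasible directions, and we want to bound $\norm{L_{22}(y)}$ by a multiple of $t$ whenever $L_{22}(y)+tI_{22}\succeq0$. Note that the upper bound $tMI_{22}\succeq L_{22}(y)+tI_{22}$ is equivalent to $\lambda_{\max}(L_{22}(y))\le (M-1)t$, and since $L_{22}(y)+tI_{22}\succeq0$ already forces $\lambda_{\min}(L_{22}(y))\ge -t$, it suffices to produce a constant $K$ with $\norm{L_{22}(y)}\le Kt$ for all such $y$; then $M=K+1$ works.

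First I would reduce to a compactness argument. Consider the set $\mathcal{Y}\coloneqq\{\,y\in\Re^m : L_{22}(y)+I_{22}\succeq0,\ \norm{L_{22}(y)}=1\,\}$ obtained by taking $t=1$ and normalizing; using homogeneity, it is enough to show $\mathcal{Y}$ lies at bounded ``$L_{22}$-distance'' from the kernel. More cleanly: let $\mathcal{N}\coloneqq\{y : L_{22}(y)=0\}$ and work in the quotient $\Re^m/\mathcal{N}$, on which $\bar L_{22}$ is injective, hence the induced norm $\norm{\bar L_{22}(\bar y)}$ is equivalent to any fixed norm on the finite-dimensional space $\Re^m/\mathcal{N}$. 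Then the claim $\norm{L_{22}(y)}\le Kt$ whenever $L_{22}(y)+tI_{22}\succeq0$ becomes: the set $\{\bar y : \bar L_{22}(\bar y)+I_{22}\succeq0\}$ is bounded in $\Re^m/\mathcal{N}$ (after which one rescales by $t$). Boundedness of this set follows because $\{W\in\S^{n-r} : W+I_{22}\succeq0\}\cap\R(\bar L_{22})$ is a closed set containing no nontrivial recession direction: if $\bar L_{22}(\bar d)$ were a recession direction it would satisfy $L_{22}(d)+sI_{22}\succeq0$ for all $s>0$ and, scaling $s\downarrow0$, give $L_{22}(d)\succeq0$; but the constraints of \textbf{RD} together with Slater (item~1(b) of Proposition~\ref{prop:sd1}) and an LP-type argument show $L_{22}(d)\succeq0$ forces $L_{22}(d)=0$, i.e.\ $\bar d=0$. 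Hence the set is bounded, $K$ exists, and $M\coloneqq K+1$ gives \eqref{bound1}; the case $t=0$ is immediate since then $L_{22}(y)\succeq0$ implies $L_{22}(y)=0$.

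The main obstacle is the step showing $L_{22}(d)\succeq 0 \Rightarrow L_{22}(d)=0$. The cleanest justification goes through the structure of \textbf{RD}: pick $\hat y$ with $L_{11}(\hat y)\succ0$, $L_{12}(\hat y)=0$, $L_{22}(\hat y)=0$ (Slater for \textbf{RD}); then for small $\tau>0$ the point $\hat y+\tau d$ satisfies $L_{11}(\hat y+\tau d)\succ0$, and if additionally $L_{22}(d)\succeq0$ and $L_{12}(d)$ is handled appropriately, one can build a feasible slack of \ref{eq:d} whose $(2,2)$-block is $\tau L_{22}(d)\succeq 0$ — but by item~1(a) of Proposition~\ref{prop:sd1} every feasible slack has vanishing $(2,2)$-block, forcing $L_{22}(d)=0$. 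Care is needed because $L_{12}(d)$ need not vanish, so instead of $\hat y+\tau d$ one should argue directly with the minimal face: $L_{22}(d)\succeq0$ means $d$ points into the recession cone of the feasible slacks of a perturbed strongly feasible problem, and the minimality of the face $\stdFace$ (equivalently, that $\PSDcone{n}\cap\stdAff\subseteq\stdFace$) is exactly what kills the $(2,2)$-block. I would present this as the key claim and prove it via the facial-reduction characterization rather than by direct matrix manipulation.
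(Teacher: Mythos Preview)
Your compactness/recession strategy is sound in outline, but the step you yourself flag as ``the main obstacle''—namely, that $L_{22}(d)\succeq 0$ forces $L_{22}(d)=0$—cannot be proved from the minimal-face description (item~1 of Proposition~\ref{prop:sd1}) alone. That description only tells you that \emph{feasible} slacks have vanishing $(2,2)$-block; it says nothing about the full affine range of $L_{22}$. A small example shows the claim, and hence the lemma itself, is false without the singularity-degree-one hypothesis: take $n=3$, $r=1$, $C=\mathrm{diag}(1,0,0)$ and a single $A^1$ with $A^1_{12}=(1,0)$, $A^1_{22}=\mathrm{diag}(0,-1)$. Then $L_{22}(y)=\mathrm{diag}(0,y_1)$, so $L_{22}(y)+tI_{22}\succeq 0$ allows $y_1$ arbitrarily large and no uniform $M$ exists. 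One checks that this instance has singularity degree two, so it is exactly the missing hypothesis that fails. Your proposed patch via ``minimality of the face'' cannot rescue this: in the example the face is minimal, yet the $(2,2)$-block is not killed for infeasible $y$.

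What actually makes the lemma work is item~3 of Proposition~\ref{prop:sd1}: under singularity degree one there is a reducing direction $X$ with $X_{22}\succ 0$ and $X\bullet C=X\bullet A^i=0$, hence $L_{22}(y)\bullet X_{22}=0$ for \emph{every} $y$. This is precisely the ingredient you never invoke. Once you have it, your recession argument goes through immediately (a PSD matrix orthogonal to the interior point $X_{22}$ must vanish), but in fact the paper bypasses the compactness detour entirely: since $X_{22}\succ 0$ there is $\kappa>0$ with $\norm{Y}\le \kappa\, X_{22}\bullet Y$ for all $Y\succeq 0$, and applying this to $Y=L_{22}(y)+tI_{22}$ gives $\norm{Y}\le \kappa t\,(X_{22}\bullet I_{22})$ directly. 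So the fix is not to sharpen the minimal-face argument but to use the positive-definite certificate $X_{22}$ that singularity degree one provides.
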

}

\begin{proof}
Recall that we are under the setting of item~1 of Proposition~\ref{prop:sd1} and \ref{eq:d} has singularity degree one.
With $X$ as in item~3 of Proposition~\ref{prop:sd1}, the rank of $X_{22}$ is $n-r$. Since $X_{22}$ is positive definite, 
there exists a constant $\kappa > 0$ such that
\begin{equation}\label{eq:bound}
\norm{Y} \leq \kappa X_{22}\bullet Y, \qquad \forall  Y \in \PSDcone{n-r},
\end{equation}
e.g., \cite[Lemma~26]{Lourenco17}.

Next, given some arbitrary $t > 0$, we define
\[
\mathcal{C}_t  \coloneqq  \{Y\in \PSDcone{n-r} \mid Y=L_{22}(y)+ tI_{22} \succeq 0 ,\   y \in \Re^m \}.
\] 

By the assumptions on $X$ we have $0 = X \bullet A^i = A^i_{22} \bullet X_{22}$ and $0 = C \bullet X = C_{22} \bullet X_{22}$.
That is, $L_{22}(y)\bullet X_{22} = 0$ holds for every $y$. 
In view of \eqref{eq:bound}, for $Y \in \mathcal{C}_t$, since $Y$ belongs to $\PSDcone{n-r}$ as well, we conclude that the following bound holds.
\[
\norm{Y} \leq \kappa X_{22} \bullet (L_{22}(y) + tI_{22}) = \kappa t (X_{22} \bullet I_{22}).
\]

In particular, letting $M \coloneqq \kappa(X_{22} \bullet I_{22}),$
we conclude that the maximum eigenvalue of an arbitrary $Y \in \mathcal{C}_t$ 
satisfies $\lambda_{\max}(Y) \leq tM$ and $M$ does not depend on $t$. In particular,  
$tMI_{22} \succeq L_{22}(y) + tI_{22}$ holds  for all $y$ such that $	L_{22}(y) + tI_{22} \succeq 0 $.
\end{proof}

\medskip
In the following, we define 
\begin{equation}\label{eq:ls}
\begin{aligned}
	l_{12}^2(y)& \coloneqq L_{12}(y)\bullet L_{12}^T(y),\\
	l_{11}(y) & \coloneqq L_{11}(y)\bullet I_{11}, \\
	l_{22}(y) & \coloneqq  L_{22}(y)\bullet I_{22}.
\end{aligned}
\end{equation}
We will use $M$ as a global constant satisfying  \eqref{bound1}
in Lemma \ref{bound}.
Let us consider the following problem.
\begin{equation*}
{\bf RD1}(\alpha,t)\ \ \ 
\max\ b^T y - \frac{l_{12}^2(y)}{M \alpha},\ \ {\rm{s.t.}}\ \left(\begin{array}{cc} L_{11}(y)+t\alpha I_{11} & L_{12}(y)\\ L_{12}^T(y) & L_{22}(y) +t\alpha I_{22} \end{array}\right)\succeq 0.
\end{equation*}
Let $u_1(\alpha,t)$ be the optimal value function of ${\bf RD1}(\alpha,t)$.  
We  show that $v(t\alpha,t)$ is majorized by the optimal value $u_1(\alpha,t)$
as follows.
\begin{lemma}\label{6}
	For $t > 0$ and $\alpha > 0$, we have 
	\begin{equation}\label{l6}
	v(0,t)\leq 
	v(t\alpha, t) \leq u_1(\alpha,t)+t^2\alpha n  + t c.
	\end{equation}
\end{lemma}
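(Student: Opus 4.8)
The plan is to establish the two inequalities in \eqref{l6} separately. The left inequality $v(0,t) \leq v(t\alpha,t)$ is the easy direction: since $\alpha > 0$, the feasible region of ${\bf D}(t\alpha,t)$ (in the $y$-variable, the set where $L(y) + t\alpha I \succeq 0$) contains the feasible region of ${\bf D}(0,t)$ (the set where $L(y) \succeq 0$), because $L(y) \succeq 0$ implies $L(y) + t\alpha I \succeq L(y) \succeq 0$. The objective of ${\bf D}(t\alpha,t)$ is $b^Ty - t(l(y)-c)$, which is exactly the objective of ${\bf D}(0,t)$; comparing the two problems written in the form \eqref{eq:dtat} shows they share the same objective, so enlarging the feasible set can only increase the optimal value. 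Hence $v(0,t) \leq v(t\alpha,t)$.

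For the right inequality, the idea is to take an (almost) optimal $y$ for ${\bf D}(t\alpha,t)$ and show it is feasible for ${\bf RD1}(\alpha,t)$ (the constraint is literally the same constraint, since $L(y)+t\alpha I \succeq 0$ is what appears in both), then bound the difference of the two objective functions. The objective of ${\bf D}(t\alpha,t)$ is $b^Ty - t(l(y)-c)$ and the objective of ${\bf RD1}(\alpha,t)$ is $b^Ty - l_{12}^2(y)/(M\alpha)$, so I need to show
\[
-t(l(y)-c) \;\leq\; -\frac{l_{12}^2(y)}{M\alpha} + t^2\alpha n + tc,
\]
i.e. $-t\,l(y) \leq -l_{12}^2(y)/(M\alpha) + t^2\alpha n$, for every $y$ feasible for ${\bf D}(t\alpha,t)$. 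Decompose $l(y) = L(y)\bullet I = l_{11}(y) + l_{22}(y)$ using the block notation \eqref{eq:ls}. The feasibility $L(y) + t\alpha I \succeq 0$ forces the diagonal block $L_{22}(y) + t\alpha I_{22} \succeq 0$, so by Lemma~\ref{bound} (applied with $t$ replaced by $t\alpha$) we get $L_{22}(y) + t\alpha I_{22} \preceq t\alpha M I_{22}$, whence $l_{22}(y) = L_{22}(y)\bullet I_{22} \leq t\alpha M (I_{22}\bullet I_{22}) - t\alpha(I_{22}\bullet I_{22})$; in any case $l_{22}(y)$ is bounded above by something of order $t\alpha n$, and also $l_{22}(y) \geq -t\alpha(n-r)$ from positive semidefiniteness of $L_{22}(y)+t\alpha I_{22}$. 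The term $l_{11}(y)$ is handled via the Schur-complement structure: since $L(y)+t\alpha I \succeq 0$, the $(1,1)$ block $L_{11}(y)+t\alpha I_{11}\succeq 0$ and moreover the off-diagonal block $L_{12}(y)$ must lie in the range of $L_{11}(y)+t\alpha I_{11}$ with $L_{12}(y)^T(L_{11}(y)+t\alpha I_{11})^{\dagger}L_{12}(y) \preceq L_{22}(y)+t\alpha I_{22} \preceq t\alpha M I_{22}$. Combining these to extract a lower bound on $l_{11}(y)$ in terms of $l_{12}^2(y)$ and quantities of order $t\alpha n$ is the technical heart of the argument; after multiplying through by $t$ the cross term $l_{12}^2(y)/(M\alpha)$ appears with the right constant and the leftover is absorbed into $t^2\alpha n$.

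The main obstacle I expect is precisely this last bookkeeping step: getting the constant in front of $l_{12}^2(y)$ to come out exactly as $1/(M\alpha)$ and all remaining error terms to fit under the single bound $t^2\alpha n + tc$. The key inequality should be something like
\[
L_{11}(y) + t\alpha I_{11} \;\succeq\; \frac{1}{t\alpha M}\,L_{12}(y)L_{12}(y)^T,
\]
which follows from the Schur complement condition $L_{22}(y)+t\alpha I_{22} \succeq L_{12}(y)^T(L_{11}(y)+t\alpha I_{11})^{\dagger}L_{12}(y)$ together with the Lemma~\ref{bound} bound $L_{22}(y)+t\alpha I_{22}\preceq t\alpha M I_{22}$; taking the trace inner product of both sides of the displayed inequality with $I_{11}$ gives $l_{11}(y) + t\alpha r \geq l_{12}^2(y)/(t\alpha M)$, i.e. $t\,l_{11}(y) \geq l_{12}^2(y)/(\alpha M) - t^2\alpha r$. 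Adding $t\,l_{22}(y) \geq -t^2\alpha(n-r)$ yields $t\,l(y) \geq l_{12}^2(y)/(\alpha M) - t^2\alpha n$, which rearranges to exactly the inequality needed above. Feeding a sequence of $\varepsilon$-optimal solutions of ${\bf D}(t\alpha,t)$ into this estimate (to handle possible non-attainment) and passing to the supremum then gives $v(t\alpha,t) \leq u_1(\alpha,t) + t^2\alpha n + tc$, completing the proof.
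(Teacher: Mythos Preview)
Your proposal is correct and follows essentially the same route as the paper. The first inequality is handled identically. For the second, both you and the paper compare the two objectives over the common feasible region $\{y:L(y)+t\alpha I\succeq 0\}$ and reduce everything to the key estimate
\[
L_{11}(y)+t\alpha I_{11}\ \succeq\ \frac{1}{t\alpha M}\,L_{12}(y)L_{12}(y)^{T},
\]
then take the trace against $I_{11}$ and combine with $l_{22}(y)\ge -t\alpha(n-r)$. The only cosmetic difference is the direction in which the Schur complement is taken: the paper restricts to strictly positive definite feasible points (available by Slater), eliminates the $(2,2)$ block, and uses $(L_{22}(y)+t\alpha I_{22})^{-1}\succeq (t\alpha M)^{-1}I_{22}$ directly; you instead eliminate the $(1,1)$ block using the pseudoinverse, combine with $L_{22}(y)+t\alpha I_{22}\preceq t\alpha M I_{22}$, and then read off the same inequality via the other Schur complement of the block matrix $\begin{pmatrix}L_{11}(y)+t\alpha I_{11}&L_{12}(y)\\ L_{12}(y)^{T}&t\alpha M I_{22}\end{pmatrix}$. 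Both routes are equivalent.
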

\begin{proof}
	We prove the first inequality. Recall that $v(0,t)$ is the optimal value of  ${\bf D}(0,t)$ and  $v(t\alpha,t)$ is the optimal value of  ${\bf D}(t\alpha,t)$.
	Since the objective functions of the two problems are identical and the feasible region of ${\bf D}(t\alpha,t)$ contains the feasible region of ${\bf D}(0,t)$ (with respect to the $y$ variable),
	we have $v(0,t)\leq v(t\alpha,t)$.
	
	Now we prove the second inequality. Recalling the definitions in \eqref{eq:ls} and \eqref{eq:dtat}, 	${\bf D}(t\alpha, t)$ can be written as follows. 
		\begin{equation}\label{dalpha}
		\max\ b^T y + t(c- l_{11}(y)-l_{22}(y)),\ {\rm{s.t.}}\left(\begin{array}{cc} L_{11}(y)+t\alpha I_{11} & L_{12}(y)\\ L_{12}^T(y) & L_{22}(y)+t\alpha I_{22} \end{array}\right)\succeq 0.
		\end{equation}
		We note that ${\bf D}(t\alpha, t)$ and
		${\bf RD1}(\alpha,t)$ share the same feasible region. Furthermore, 
		${\bf D}(t\alpha, t)$ satisfies Slater's condition so there exists a sequence of $y^k$ corresponding to feasible solutions to  ${\bf D}(t\alpha, t)$ such that 
		the corresponding matrices are all positive definite and $(b^T y^k -tl_{11}(y^k)-tl_{22}(y^k)+tc)$ converges to the optimal value ${\bf D}(t\alpha, t)$. Thus, in order to establish the second inequality in \eqref{l6}, it is enough to examine the $y$'s that correspond to positive definite matrices.
		
		So, let $y$ be feasible solution to ${\bf D}(t\alpha, t)$ associated to a positive definite matrix.
		First, we find an upper bound on  $- tl_{11}(y)$.	Computing the Schur complement, we obtain
	\begin{equation}\label{eq:lem_aux}
	L_{11}(y)+t\alpha I_{11}- L_{12}(y) (L_{22}(y)+t\alpha I_{22})^{-1} L_{12}^T(y) \succeq 0.
	\end{equation}
	Since $L_{22}(y)+t\alpha I_{22} \succeq 0$, we obtain $Mt\alpha I_{22} \succeq (L_{22}(y)+t\alpha I_{22})$ from \eqref{bound1}. So each eigenvalue of 
	$(L_{22}(y)+t\alpha I_{22})$ is less than or equal to $Mt\alpha$.
	This implies that $(L_{22}(y)+t\alpha I_{22})^{-1} \succeq \frac{I_{22}}{Mt\alpha}$. Therefore, we have
	\[
	L_{12}(y)\left((L_{22}+t\alpha I_{22})^{-1} - \frac{I_{22}}{Mt\alpha}\right)L_{12}(y)^T \succeq 0,
	\]
	which implies that
	\[
		L_{12}(y)(L_{22}+t\alpha I_{22})^{-1}L_{12}(y)^T \succeq   L_{12}(y)\frac{I_{22}}{Mt\alpha}L_{12}(y)^T.
	\]
Then, in view of \eqref{eq:lem_aux}, we obtain
	\[
	L_{11}(y)+t\alpha I_{11}- \frac1{tM\alpha} L_{12}(y)  L_{12}(y)^T \succeq 0.
	\]
	Taking the inner-product with $I_{11}$ and multiplying by $t$, we obtain
	\begin{equation}\label{tl11}
	t l_{11}(y) + t^2\alpha r -\frac1{M\alpha} l_{12}^2(y) \geq 0.
	\end{equation}
	Since  $L_{22}(y)+t\alpha I_{22}\succeq 0$, we have  $-tl_{22}(y) \leq t^2\alpha (n-r)$.
	From this inequality and \eqref{tl11}, we see
	that the objective function of \eqref{dalpha} is majorized by the objective function of {\bf RD1} plus
	a constant $t^2\alpha n+tc$ as follows. 
	\[
	 b^T y + t(c- l_{11}(y)-l_{22}(y)) \leq b^T y -\frac{l_{12}^2(y)}{M \alpha}+t^2\alpha n+ tc.
	\]
	Since the optimal value of \eqref{dalpha} (or equivalently ${\bf D}(t\alpha, t)$) is $v(t\alpha,t)$ and the optimal value of 
	${\bf RD1}(\alpha,t)$ is $u_1(\alpha,t)$,
	the second inequality of \eqref{l6} follows immediately from this inequality.
\end{proof}

\begin{lemma}\label{lem:rd1}
	Let $\alpha > 0$. Then, there exists 
	a constant $\hat t_\alpha>0$ depending on $\alpha$ such that,\ if $t \in (0,\hat t_\alpha)$ and $\{y^k\}$ is 
	an optimal sequence of ${\bf RD1}(\alpha,t)$\footnote{${\bf RD1}(\alpha,t)$ is not a linear SDP, but we can define optimal sequences analogously as in Section~\ref{sec:pre}.} then the 
	following bound holds for sufficiently large $k$:
	\begin{equation} \label{zz}
		l_{12}^2(y^k) \leq M(v({\bf P}) - v({\bf D})+2) \alpha =K\alpha,
	\end{equation}
	where $K\coloneqq M(v({\bf P}) - v({\bf D})+2) $ and $M$ is a constant as in Lemma~\ref{bound}. 
	
\end{lemma}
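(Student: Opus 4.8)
The plan is to bound $l_{12}^2(y^k)$ by exploiting the structure of the objective of ${\bf RD1}(\alpha,t)$, whose objective is $b^Ty - l_{12}^2(y)/(M\alpha)$. The key point is that the optimal value $u_1(\alpha,t)$ cannot be much smaller than $v({\bf D})$ and cannot be much larger than $v({\bf P})$, so along an optimal sequence the penalty term $l_{12}^2(y^k)/(M\alpha)$ is forced to stay bounded. First I would establish a lower bound on $u_1(\alpha,t)$: since ${\bf RD}$ is feasible (indeed Slater by item~1 of Proposition~\ref{prop:sd1}) and any feasible $y$ for ${\bf RD}$ has $L_{12}(y)=0$, hence $l_{12}^2(y)=0$, such a $y$ is also feasible for ${\bf RD1}(\alpha,t)$ with objective value $b^Ty$. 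Taking $y$ to (nearly) attain $v({\bf RD})=v({\bf D})$ gives $u_1(\alpha,t)\ge v({\bf D})-1$ for all $t>0$ (or for $t$ small, if one prefers to use a fixed near-optimal $y$).

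Next I would produce an upper bound on $u_1(\alpha,t)$. The natural route is to drop the penalty term, so $u_1(\alpha,t)\le \sup\{b^Ty : L(y)+t\alpha I\succeq 0\}$, which is exactly $v({\bf D}(t\alpha,t))$ up to the linear objective correction $-t(l(y)-c)$ appearing in \eqref{eq:dtat}; more cleanly, one can relate it to $v(t\alpha,t)$ plus lower-order terms, or directly to the perturbed problem ${\bf RD}(S)$ of item~2 of Proposition~\ref{prop:sd1}, whose optimal value $w(S)$ is continuous at $S=0$ with $w(0)=v({\bf D})$. Here is where I expect the main obstacle: carefully bounding $\sup\{b^Ty : L(y)+t\alpha I\succeq 0\}$. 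One cannot simply use $v({\bf P})$ as an upper bound for this supremum because the objective in ${\bf RD1}$ is $b^Ty$ without the $-t(l(y)-c)$ correction, so the feasible-region enlargement by $t\alpha I$ could in principle push $b^Ty$ up. The fix is to use the $l_{12}^2$ penalty together with Lemma~\ref{bound} (exactly as in Lemma~\ref{6}): for a feasible $y$ associated with a positive definite matrix, inequality \eqref{tl11} gives $tl_{11}(y)+t^2\alpha r \ge l_{12}^2(y)/(M\alpha)\ge 0$, and combined with $-tl_{22}(y)\le t^2\alpha(n-r)$ we get that $b^Ty - l_{12}^2(y)/(M\alpha) \le b^Ty + t(c - l_{11}(y) - l_{22}(y)) + t^2\alpha n + tc$-type estimates, i.e.\ the ${\bf RD1}$ objective is controlled by the ${\bf D}(t\alpha,t)$ objective, whose sup is $v(t\alpha,t)\le v({\bf P})+1$ for $t$ small by Lemma~\ref{l3} (item~1) and Theorem~\ref{thm:tlmo}. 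This forces a choice of threshold $\hat t_\alpha$ depending on $\alpha$ so that the $t^2\alpha n + tc$ slack is below, say, $1$.

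Finally I would combine the two bounds. Along an optimal sequence $\{y^k\}$ for ${\bf RD1}(\alpha,t)$ with $t\in(0,\hat t_\alpha)$, we have $b^Ty^k - l_{12}^2(y^k)/(M\alpha) \to u_1(\alpha,t)$, so for large $k$, $b^Ty^k - l_{12}^2(y^k)/(M\alpha) \ge u_1(\alpha,t) - 1 \ge v({\bf D}) - 2$. On the other hand the upper-bound analysis gives $b^Ty^k \le v({\bf P}) + (\text{small})$; restricting to the $y^k$ associated with positive definite matrices (which is legitimate since ${\bf RD1}(\alpha,t)$ inherits Slater's condition) and feeding in $b^Ty^k \le v({\bf P})+\text{(terms}<1)$ for $t<\hat t_\alpha$, we obtain $l_{12}^2(y^k)/(M\alpha) \le b^Ty^k - (v({\bf D})-2) \le v({\bf P}) - v({\bf D}) + 2$, hence $l_{12}^2(y^k) \le M(v({\bf P})-v({\bf D})+2)\alpha = K\alpha$, which is exactly \eqref{zz}. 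The only genuinely delicate bookkeeping is making sure the additive slack terms ($t^2\alpha n$, $tc$, and the $\pm 1$ from near-optimality) are all absorbed into the ``$+2$'' by choosing $\hat t_\alpha$ small enough depending on $\alpha$, $n$, and $c$; everything else is a direct assembly of Lemmas~\ref{l3}, \ref{bound}, \ref{6} and Proposition~\ref{prop:sd1}.
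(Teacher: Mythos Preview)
Your overall sandwich strategy is exactly the paper's: lower-bound $u_1(\alpha,t)$ by (roughly) $v({\bf D})-1$ using a feasible $y$ for ${\bf RD}$ (for which $l_{12}^2=0$), upper-bound $b^Ty^k$ by something close to $v({\bf P})$, then rearrange. Your lower bound and your final combination are both correct and match the paper verbatim.

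The weak spot is the upper bound. You correctly isolate the object $\sup\{b^Ty : L(y)+t\alpha I\succeq 0\}$, but you then misidentify it and take an unnecessary detour. That supremum is \emph{exactly} $v(t\alpha,0)$: in ${\bf D}(\varepsilon,\eta)$ the objective is $\sum_i (b_i+\eta A^i\bullet I)y_i$, so when $\eta=0$ the objective is just $b^Ty$ and the constraint is $L(y)+t\alpha I\succeq 0$. Hence $b^Ty^k\le v(t\alpha,0)$ is immediate because $y^k$ is feasible for ${\bf D}(t\alpha,0)$, and item~2 of Lemma~\ref{l3} gives $v(t\alpha,0)\to v({\bf P})$, so $v(t\alpha,0)\le v({\bf P})+\tfrac12$ for $t<\hat t_\alpha$. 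This is precisely the paper's argument; the ``$-t(l(y)-c)$ correction'' you worry about is absent when $\eta=0$.

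Your detour through Lemma~\ref{6} and $v(t\alpha,t)$ does not work as stated. Inequality \eqref{tl11} yields $-tl_{11}(y)\le t^2\alpha r - l_{12}^2(y)/(M\alpha)$, which is how one proves (${\bf D}(t\alpha,t)$-objective) $\le$ (${\bf RD1}$-objective) $+t^2\alpha n+tc$, i.e.\ the direction of Lemma~\ref{6}. You need the reverse: an upper bound on $b^Ty^k$ via $v(t\alpha,t)$. But from $b^Ty^k+t(c-l_{11}(y^k)-l_{22}(y^k))\le v(t\alpha,t)$ one gets $b^Ty^k\le v(t\alpha,t)-tc+tl_{11}(y^k)+tl_{22}(y^k)$, and there is no upper bound on $tl_{11}(y^k)$ available from the Schur-complement estimates (only a lower bound). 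So this route is blocked. The fix is simply to replace the detour by the direct observation $b^Ty^k\le v(t\alpha,0)$.
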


\begin{proof}
	Recall that $\lim_{t\downarrow 0} v(t\alpha, 0) = v({\bf P})$ and 
	let $\hat t_\alpha>0$ be small enough such that  
	\begin{equation} \label{ddda}
		v(t\alpha,0) \leq v({\bf P})+\frac12
	\end{equation}
	holds for $0<t<\hat t_\alpha$.
	
	Let $t \in (0, \hat t_\alpha)$ and let
	$\{y^k\}$ be an optimal sequence to ${\bf RD1}(\alpha,t)$. Each $y^k$ is also a feasible solution to ${\bf D}(t\alpha, 0)$, so the following inequality holds for every $k$
	\begin{equation}\label{ccca}
	b^T y^k \leq v(t\alpha,0).
	\end{equation}
	Next, let $\{y_D^k\}$ be an optimal sequence to $\bf D$. Then $\lim_{k\rightarrow\infty} b^T y_D^k
	=v({\bf D})$. 
	Recalling that we are under the setting of Proposition~\ref{prop:sd1}, we have that 
	$L_{12}(y_D^k) = 0$ holds which implies 
	that $l_{12}^2(y_D^k) = 0$ holds as well, see \eqref{eq:ls}. 
	Furthermore, each $y_D^k$ is feasible to ${\bf RD1}(\alpha,t)$ as well. 
	In view for these facts, for sufficiently large $k$, 
	we have  $v({\bf D})-1   \leq b^T y_D^k \leq v({\bf D})$ which leads to
	\begin{align*}
		v({\bf D})-1 &\leq b^T y_D^k \\ 
		&= b^T y_D^k - \frac1{M\alpha}l_{12}^2(y_D^k)\\
		& \leq v({\bf RD1}(\alpha,t))\\
		& \leq b^T y^k - \frac1{M\alpha} l_{12}^2(y^k) +\frac12 \\
		&\leq v(t\alpha, 0)- \frac1{M\alpha} l_{12}^2(y^k)+\frac12 \\
		& \leq v({\bf P})+1- \frac1{M\alpha} l_{12}^2(y^k),
	\end{align*}
	where the first equality holds because $l_{12}^2(y_D^k)=0$, the second inequality holds because $y_D^k$ is feasible for
	${\bf RD1}(\alpha,t)$, and the third inequality holds (for sufficiently large $k$) because $\{y^k\}$ is an
	optimal sequence.
	Moreover, the last two inequalities follow from \eqref{ccca} and \eqref{ddda}, respectively.
	Overall, we have
	\[
	\frac1{M\alpha} l_{12}^2(y^k) \leq v({\bf P}) - v({\bf D})+2,
	\]
	for sufficiently large $k$.
\end{proof}

Now we consider
\[
{\bf RD2}(\alpha,t)\ \ \ 
\max\ b^T y ,\ \ {\rm{s.t.}}\ \left(\begin{array}{cc} L_{11}(y)+t\alpha I_{11} & L_{12}(y)\\ L_{12}^T(y) & L_{22}(y) +t\alpha I_{22} \end{array}\right)\succeq 0,\ \ 
l_{12}^2(y) \leq K\alpha,
\]
where $K$  is as in Lemma~\ref{lem:rd1}.
Let us denote by $u_2(\alpha,t)$ the optimal value of ${\bf RD2}(\alpha,t)$.
\begin{lemma}\label{lem:bar_u}
For $\alpha > 0$, the function defined by	$\bar u(\alpha) \coloneqq \lim_{t\downarrow 0} u_2(\alpha,t)$ is well-defined and finite.
\end{lemma}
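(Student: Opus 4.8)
The plan is to show that, for a fixed $\alpha>0$, the function $t\mapsto u_2(\alpha,t)$ is monotone nondecreasing on $(0,\infty)$ and is squeezed between two bounds that are finite and uniform in $t$; the limit as $t\downarrow 0$ then exists and equals $\inf_{t>0}u_2(\alpha,t)$, which is a finite number, and we set $\bar u(\alpha)$ to be this value.

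First I would prove monotonicity in $t$. Fix $0<t_1<t_2$. If $y$ is feasible for ${\bf RD2}(\alpha,t_1)$, then the block matrix of ${\bf RD2}(\alpha,t_2)$ evaluated at $y$ is obtained from that of ${\bf RD2}(\alpha,t_1)$ by adding $(t_2-t_1)\alpha I\succeq 0$, hence remains positive semidefinite, and the side constraint $l_{12}^2(y)\le K\alpha$ does not involve $t$; thus the feasible region of ${\bf RD2}(\alpha,t_1)$ is contained in that of ${\bf RD2}(\alpha,t_2)$. Since the objective $b^Ty$ is identical in both problems, $u_2(\alpha,t_1)\le u_2(\alpha,t_2)$. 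Consequently $\lim_{t\downarrow 0}u_2(\alpha,t)=\inf_{t>0}u_2(\alpha,t)$ exists in $[-\infty,+\infty]$, and it remains only to show $u_2(\alpha,t)$ is finite (and bounded below) for $t>0$.

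Next I would pin down the two bounds. For the lower bound, take an optimal sequence $\{y_D^k\}$ of \ref{eq:d}, so that $L(y_D^k)\succeq 0$ and $b^Ty_D^k\to v({\bf D})$. Since we are under the setting of item~1 of Proposition~\ref{prop:sd1}, $L(y_D^k)\succeq 0$ forces $L_{12}(y_D^k)=0$, $L_{22}(y_D^k)=0$ and $L_{11}(y_D^k)\succeq 0$; hence the block matrix of ${\bf RD2}(\alpha,t)$ at $y_D^k$ equals $\begin{pmatrix} L_{11}(y_D^k)+t\alpha I_{11} & 0\\ 0 & t\alpha I_{22}\end{pmatrix}\succeq 0$ for every $t>0$, and $l_{12}^2(y_D^k)=0\le K\alpha$, so each $y_D^k$ is feasible for ${\bf RD2}(\alpha,t)$. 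Thus ${\bf RD2}(\alpha,t)$ is feasible and $u_2(\alpha,t)\ge b^Ty_D^k$ for all $k$, giving $u_2(\alpha,t)\ge v({\bf D})>-\infty$ for every $t>0$. For the upper bound, any $y$ feasible for ${\bf RD2}(\alpha,t)$ satisfies $L(y)+t\alpha I\succeq 0$, i.e.\ $y$ is feasible for ${\bf D}(t\alpha,0)$ with the same objective value $b^Ty$; therefore $u_2(\alpha,t)\le v(t\alpha,0)$, and $v(t\alpha,0)$ is finite for every $t>0$ by item~2 of Lemma~\ref{l3} (applied with $t\alpha$ in place of $t$, which is legitimate since $\alpha>0$). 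Combining, $u_2(\alpha,t)\in\Re$ for each $t>0$, so $\bar u(\alpha)=\inf_{t>0}u_2(\alpha,t)$ satisfies $v({\bf D})\le\bar u(\alpha)\le v(t\alpha,0)<+\infty$ for any fixed $t>0$; in particular $\bar u(\alpha)$ is well-defined and finite.

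The only nontrivial ingredient is the lower-bound step: this is precisely where the singularity-degree-one hypothesis enters, through the block characterization of $\PSDcone{n}$-feasibility in Proposition~\ref{prop:sd1}, which is what forces members of an optimal sequence of \ref{eq:d} to satisfy automatically the extra constraint $l_{12}^2(y)\le K\alpha$ and hence to be feasible for ${\bf RD2}(\alpha,t)$ for all $t>0$. Everything else — monotonicity in $t$ and the domination by $v(t\alpha,0)$ — is routine.
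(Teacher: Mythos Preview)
Your proof is correct and matches the paper's argument exactly: monotonicity of $u_2(\alpha,\cdot)$ via nested feasible regions, the upper bound $u_2(\alpha,t)\le v(t\alpha,0)$, and the lower bound $u_2(\alpha,t)\ge v({\bf D})$ obtained by observing that any $y$ feasible for ${\bf D}$ has $L_{12}(y)=0$, $L_{22}(y)=0$ and is therefore feasible for ${\bf RD2}(\alpha,t)$. One small correction to your closing commentary: the block characterization you invoke for the lower bound is item~1 of Proposition~\ref{prop:sd1}, which holds for any feasible ${\bf D}$, not just those of singularity degree one; the singularity-degree-one hypothesis enters earlier, through Lemma~\ref{bound} (and hence the constant $K$), not in this lemma.
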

\begin{proof}
	We note that $v(t\alpha,0)$ is the optimal value of ${\bf D}(t\alpha, 0)$ and ${\bf D}(t\alpha, 0)$ is precisely the problem obtained by removing the constraint ``$l_{12}^2(y) \leq K\alpha$'' from {\bf RD2}. Therefore,  we have \[u_2(\alpha,t)\leq v(t\alpha,0).\]
	By item 2 of Lemma \ref{l3}, for sufficiently small $t > 0$, $v(t\alpha,0)$ is bounded from above by a finite value, which implies that the same is true for $u_2(\alpha,t)$.  
	For fixed $\alpha$, if we increase $t$, the feasible region of ${\bf RD2}(\alpha,t)$ enlarges (more precisely, it does not shrink). 
	Therefore, $u_2(\alpha,t) \leq u_2(\alpha,\hat t)$ holds if $ 0 < t  \leq \hat t$, namely,
	$u_2(\alpha,t)$ is a monotone increasing function with respect to $t$. 
	In addition,
	$u_2(\alpha,t)$ 
	is  bounded below by $v({\bf D})$, since any feasible solution to {\bf D} is feasible to ${\bf RD2}(\alpha,t)$ (again, we recall that we assume that Proposition~\ref{prop:sd1} holds, so $L_{12}(y) = 0$ and $L_{22}(y) = 0$ if $y$ is feasible to {\bf D}).
	Therefore,  $\lim_{t\rightarrow 0} u_2(\alpha,t)$ exists and is finite, thus showing that $\bar u(\alpha)$ is a well-defined function assuming finite values.
\end{proof}
\medskip

Now we are ready to finish the proof of item~1 of Theorem~\ref{theo:main}.
For $\alpha > 0$ and $\hat t_{\alpha} > 0$ as in Lemma~\ref{lem:rd1}, an optimal sequence $\{y^k\}$ to ${\bf RD1}(\alpha,t)$ satisfies \eqref{zz} for sufficiently large $k$. Furthermore, for each $y^k$, the objective value associated to ${\bf RD2}(\alpha,t)$ is greater or equal than the corresponding objective value associated to the problem   ${\bf RD1}(\alpha,t)$. In view of these facts and of Lemma~\ref{lem:rd1}, we see that $u_1(\alpha,t) \leq u_2(\alpha,t)$. Combined with Lemma~\ref{6}, we obtain{
	\begin{equation}\label{eq:v0t}
	v(0,t) \leq v(t\alpha,t) \leq u_1(\alpha,t)+t^2\alpha n  + t c \leq u_2(\alpha,t)+t^2\alpha n  + t c.
	\end{equation}
Recalling the definitions of $\bar v$ and $\bar u$ in 
\eqref{bbb1} and Lemma~\ref{lem:bar_u}, respectively, and invoking item~1 of Theorem~\ref{thm:tlmo}, we let $t\rightarrow 0$ in \eqref{eq:v0t} in order to obtain 
\[
v({\bf D}) \leq \bar v(\alpha)\leq \bar u(\alpha).
\]
Therefore, if we can show that 
\begin{equation}\label{limu2}
\lim_{\alpha\downarrow 0} \bar u(\alpha) =
\lim_{\alpha\downarrow 0}\lim_{t\downarrow0} u_2(\alpha,t) = v({\bf D}),
\end{equation}
then $\lim_{\alpha \downarrow 0} \bar v(\alpha)  = v({\bf D})$ will hold as well, which, in view of \eqref{eq:av_bar_v} and Theorem~\ref{thm:tlmo} implies the continuity of $v_a$ at $\theta = \pi/2$, which is what we wanted to show.  
We will prove \eqref{limu2} by contradiction.



\medskip\noindent
{\bf {\it Proof of \eqref{limu2}}:}
	First, we recall that, in view of item 1 of Proposition~\ref{prop:sd1}, every $y$ that is feasible to {\bf D} is feasible to ${\bf RD2}(\alpha,t)$, for every $\alpha > 0$, $t > 0$. Therefore
	\[
	v({\bf D}) \leq u_2(\alpha,t), \qquad \forall \alpha > 0, \forall t > 0,
	\]	
	which implies 
	that
	\[
	v({\bf D}) \leq \bar{u}(\alpha), \qquad \forall \alpha > 0.	
	\]
	If \eqref{limu2} does not hold, there exists $\delta > 0$ and 
	a positive sequence $\{\alpha^k\}$ such that $\lim_{k\rightarrow \infty}\alpha^k=0$
	for which 
	\[0 < 2\delta \leq |\bar u(\alpha^k) - v({\bf D})|  = \bar u(\alpha^k) - v({\bf D})\] 
	holds for sufficiently large $k$.
	Next, since $\bar u(\alpha^k) = \lim_{t\downarrow0} u_2(\alpha^k,t)$, for any given $k$, we can 
	pick $t^k\in (0,1/k] $ such that  $|\bar u(\alpha^k) -  u_2(\alpha^k,t^k)| \leq 1/k$.

	Therefore, there exists a
	sequence $(\alpha^k,t^k )\rightarrow (0, 0)$
	such that 
	\[
	u_2(\alpha^k,t^k)> v({\bf D})+\delta
	\] 
	holds for sufficiently large $k$.
	Then, for each $k$ sufficiently large, there exists a feasible solution $y^k$ to 
	${\bf RD2}(\alpha^k,t^k)$ satisfying
	\[
		b^T y^{k} \geq v({\bf D})+\delta.
	\]
	Next, we define
$S_{11}^k \coloneqq t^k\alpha^k I_{11}$,	$S_{22}^{k} \coloneqq L_{22}(y^{k})+t^k\alpha^k I_{22}$,  $S_{12}^k \coloneqq L_{12}(y^{k})$ and 
$S^k \coloneqq \begin{pmatrix} S_{11}^k & S_{12}^k \\ (S_{12}^k)^T & S_{22}^k\end{pmatrix}$.
Note that $S^k \in {\cal T}\oplus {\cal L}$ holds, where ${\cal T}\oplus {\cal L}$ was introduced as perturbation space in item~2 of Proposition~\ref{prop:sd1}.  
By definition, $y^{k}$ is a feasible solution to
the following problem.
\begin{eqnarray*}
	{\bf RD3}&\ &\ \ 
	\max_{y}\ b^T y ,\ \ {\rm{s.t.}}\ \left(\begin{array}{cc} L_{11}(y)+S_{11}^k  & L_{12}(y)\\ L_{12}^T(y) & L_{22}(y) +t^k\alpha^k I_{22} \end{array}\right)\succeq 0,  \\
	&& \ \ \ \ \ \ \ \ \ \ \ \ \ \ \ \ \ \ \ \ \ \ \ L_{22}(y)+t^k\alpha^k I_{22} =S_{22}^k, \ \  L_{12}(y)=S_{12}^k,\ \ l_{12}^2(y) \leq K\alpha^k.
\end{eqnarray*}
Because $y^k$ is also feasible to ${\bf RD2}(\alpha^k,t^k)$, 
we have $S_{22}^k=L_{22}(y^{k})+t^k\alpha^k I_{22} \succeq 0$ and by Lemma~\ref{bound}, $S_{22}^k$ goes to $0$ as $t^k$ and $\alpha^k$ goes to zero.
Similarly, since $L_{12}(y^k) \bullet L_{12}(y^k) =  l_{12}^2(y^k) \leq K\alpha^k$ holds, 
$S_{12}^k$ also goes to zero as $\alpha^k$ goes to $0$.
By definition, $S_{11}^k$ goes to zero as $t^k$ and $\alpha^k$ goes to $0$. So, overall $S^k$ goes to $0$ as $k \to \infty$.

Finally, recalling item~2 of Proposition~\ref{prop:sd1}, we note that $y^k$ is also a feasible solution to 
${\bf RD}(S^k)$ satisfying $b^T y^k \geq v({\bf D})+ \delta$.
Therefore, $w(S^k) \geq v({\bf D})+ \delta$ and $S^k\rightarrow 0$ holds as $k$ goes to infinity.
But this contradicts the continuity of $w(S)$ at $S=0$, that is, the conclusion of item 2 of Proposition~\ref{prop:sd1}.
\hfill$\qed$\\ 
Thus, \eqref{limu2} must hold, and the proof of item~1 of Theorem~\ref{theo:main} is complete.

\subsection{Proof of item~2}
Now, we proceed to prove item 2 of Theorem \ref{theo:main}. Item 2 is the dual counterpart of item 1, and therefore
one may argue that item 2 follows automatically from item~1 by primal-dual symmetry.  But to be more precise, we proceed as follows.  
Our purpose is to show that $\av(\theta)$ is continuous at $\theta =0$, which is
equivalent to continuity of $\tilde v(\beta)$ at $\beta=0$.  To this end, we rewrite {\bf P} in 
the dual format  and apply item 1.

Without loss of generality, we may assume the matrices $A_1,\ldots, A_m$ are linearly independent.
Let $ \bar n :=n\times(n+1)/2-m$.
We rewrite ${\bf P}(\varepsilon,\eta)$ in the dual format by taking a basis $A_{\perp}^1, \ldots, A_\perp^{\bar n}$ of the space ${\cal X}:=\{X\in \S^{n}\mid  A^i\bullet X = 0,\ i=1, \ldots, m\}$.  Let  $X^*$ be a $n\times n$ matrix satisfying $A^i\bullet X_i^* = b_i$ for $i=1, \ldots, m$.   Representing ${\cal X}$ as $\{\widetilde X\in \S^n \mid \widetilde X=-\sum_{i=1}^{\bar n}A_\perp^i\tilde y_i \}$ and reversing the sign of the objective function to flip ``max'' and ``min'', we obtain the following 
representation of  ${\bf P}(\varepsilon,\eta)$.

\begin{equation*}
\begin{array}{ll}
\ & \max_{\tilde y,\widetilde X} \sum_{i=1}^{\bar n} (\tilde b_i + \varepsilon A_\perp^i\bullet I)\tilde y_i 
-(C+\varepsilon I)\bullet (X^* + \eta I) \\
\ & \\
\ &{\rm{s.t.}} \ (X^* + \eta I) - \sum_{i=1}^{\bar n} A_\perp^i \tilde y_i  = \widetilde X,\ \ \ \widetilde X\succeq 0,
\end{array}
\end{equation*}
where $\tilde b_i = C^i\bullet A_\perp^i$ for $i=1, \ldots, \bar n$.  The optimal value of this problem is
$-v(\varepsilon, \eta)$ if $(\varepsilon, \eta)\not=0$, and $-v({\bf P})$ if $(\varepsilon, \eta)=0$.

Dropping the constant term $-(C+\varepsilon I)\bullet (X^* + \eta I) $,  we obtain the following problem. 
\begin{equation*}
\begin{array}{ll}
{\bf D}_{\rm P}(\eta,\varepsilon):& \max_{\tilde y,\widetilde X} \sum_{i=1}^{\bar n} (\tilde b_i + \varepsilon A_\perp^i\bullet I)\tilde y_i \\
\ & \\
\ &{\rm{s.t.}} \ (X^* + \eta I) - \sum_{i=1}^{\bar n} A_\perp^i \tilde y_i  = \widetilde X,\ \ \ \widetilde X\succeq 0.
\end{array}
\end{equation*}
The dual problem of ${\bf D}_{\rm P}(\eta,\varepsilon)$  is
\begin{equation*}
\begin{array}{ll}
{\bf P}_{\rm D}(\eta,\varepsilon):&\min_{\widetilde S} \ (X^* + \varepsilon I)\bullet \widetilde S  \\ 
\ &{\rm{s.t.}}\ A_\perp^i \bullet \widetilde S = \tilde b_i+ \eta A_\perp^i\bullet I,\ i=1, \ldots, {\bar n},\ \widetilde S \succeq 0,
\end{array}
\end{equation*}
Note that ${\bf P}_{\rm D}(\eta,\varepsilon)$ coincides with ${\bf D}(\varepsilon,\eta)$ represented in standard form using $A_\perp^i$ ($i=1,\ldots,\bar n$) and $X^*$, except that $\min$ and $\max$ are flipped by 
multiplying the objective function by $-1$ and the constant term  $-(C+\varepsilon I)\bullet (X^* + \eta I)$
is dropped.   

Let us denote by $v_1(\eta,\varepsilon)$ the common optimal value of  ${\bf P}_{\rm D}(\eta,\varepsilon)$ and
  ${\bf D}_{\rm P}(\eta,\varepsilon)$ (if it exists).
Since $X$ part of ${\bf P}(\varepsilon,\eta)$ and  $\widetilde X$ part of 
${\bf D}_{\rm P}(\eta,\varepsilon)$ coincide, and since
$S$ part of ${\bf D}(\varepsilon,\eta)$ and $\widetilde S$ part of ${\bf P}_{\rm D}(\varepsilon,\eta)$ coincide, $v_1(\eta,\varepsilon)$
is well-defined if and only if $v(\varepsilon,\eta)$ is well-defined, and the two functions are connected by the
relation
\begin{equation}\label{v1}
v_1(\eta,\varepsilon)=-v(\varepsilon,\eta) + (C+\varepsilon I)\bullet (X^* + \eta I)
\end{equation}
for $(\varepsilon,\eta)\not=(0,0)$.  In the case of $(\varepsilon,\eta)=(0,0)$, we have
\begin{equation}\label{diffopt} 
 v({\bf D}_{\rm P}(0,0))=-v({\bf P})+C\bullet X^*,\ \ 
v({\bf P}_{\rm D}(0,0))=-v({\bf D})+C\bullet X^*.
\end{equation}

Now, consider the primal-dual pair ${\bf D}_{\rm P}(0,0)$ and ${\bf P}_{\rm D}(0,0)$.
It follows from \eqref{diffopt} that  ${\bf P}_{\rm D}(0,0)$ and ${\bf D}_{\rm P}(0,0)$ have a finite nonzero duality gap
that is equal to the duality gap between ${\bf P}$ and ${\bf D}$. 
Furthermore, the singularity degree of ${\bf D}_{\rm P}(0,0)$ and ${\bf P}_{\rm D}(0,0)$ is one,
because applying facial reduction to ${\bf D}_{\rm P}(0,0)$ and  ${\bf P}_{\rm D}(0,0)$ corresponds to applying facial reduction to ${\bf P}$ and ${\bf D}$, respectively, which finishes in one step.
Therefore, we can apply the result of item 1 to ${\bf D}_{\rm P}(0,0)$.

We define $\bar v_1(\beta)=\lim_{t\downarrow 0}v_1(t\beta,t)$.
Then it follows from item 1 that $\bar v_1(\beta)$ is continuous at $\beta = 0$.  
Due to \eqref{v1} and the definition of $\bar v_1$, we have
$\bar v_1(\beta) = -\tilde v(\beta)+C\bullet X^*$.  Consequently, 
$\tilde  v(\beta)$ is continuous at $\beta=0$, as we desired.
\endproof

  
\subsection{Proof of item 3}

We move on to a proof of item~3 of  Theorem \ref{theo:main}. 
The first half of item~3 follows immediately from items 1 and 2.  In the following, we deal with the second half of item~3.  Since monotonicity and continuity of $\av$ on $[0,\pi/2]$ was already established, we focus on the bijectivity of $\av$.  To this end, we make use of Proposition~\ref{barv}, which ensures monotonicity, convexity/concavity,
and continuity of $\tilde v$ and $\bar v$. 

In view of \eqref{eq:av_bar_v}, it is enough to argue that 
$\bar{v}$ is bijection from $[0,\infty) \cup \{+\infty\}$ to $[v({\bf D}),v({\bf P})]$ with the convention that $\bar{v}(+\infty) = \lim _{\alpha \to + \infty}\bar{v}(\alpha)= v({\bf P}) $ as in \eqref{bbb1}.

By contradiction, assume that $\bar v(\alpha)$ is (monotone increasing but) not strictly monotone increasing.
Then, there exists an interval $J \subseteq \Re_+$ where $\bar v $ is constant. Slightly  abusing the notation, let us denote this constant by $\vJ$.   There are three cases to be considered:
 \begin{enumerate}[$(i)$]
 	\item $\vJ=\bar v(0) = v({\bf D})$;
 	\item $v({\bf D}) =\bar v(0)<\vJ<\bar v(\infty) = v({\bf P})$;
 	\item $\vJ=\bar v(\infty) = v({\bf P})$.
 \end{enumerate}
 In case $(i)$, we recall that $\bar v$ is monotone increasing, so $v(\bf D)$ is the minimum value of $\bar v$
 throughout $\Re_+$. Since this minimum value is attained in an interval and $\bar v$ is concave, $\bar v$ must be constant throughout $\Re_+$, see \cite[Theorem~32.1]{rockafellar} for the analogous fact about convex functions.
 That is,   $\bar v(\alpha) =  v({\bf D}) < v({\bf P})$ holds for all $\alpha > 0$.  This 
 contradicts the fact that $\lim_{\alpha\rightarrow\infty}\bar v(\alpha)=v({\bf P})$. 
 So case $(i)$ cannot occur.
 
 In case $(ii)$, 
 let $x$ be a point in the relative interior of $J$, 
 then $x$ would be a local maximum of $\bar {v}$ that is not 
 global. This contradicts the concavity of $\bar v$ and cannot occur.
 
 Finally, in case $(iii)$, since $\bar{v}$ is monotone increasing and $v(\bf{P})$ is the maximum value,  $J$ can be taken to be a closed
 half line of the form $[\alpha^*,+\infty)$, with $\alpha^* > 0$.  
 Now we turn our attentions to $\tilde v$ and recall that $\bar v$ and $\tilde v$ are related as in \eqref{eq:bartilde}.  We have $\bar v(\alpha)=\tilde v(1/\alpha)=v({\bf P})$ for all $\alpha \geq \alpha^*$.
 Recall that $\tilde v(0) = v({\bf P})$, and thus  $\tilde v(0)=\tilde v(1/\alpha) = v({\bf P})$, for all  $\alpha \geq \alpha^*$.
 Since $v({\bf P})$ is the maximum value of the convex function $\tilde v$, $\tilde v$ must be constant throughout its domain, again by \cite[Theorem~32.1]{rockafellar}. Therefore, $\tilde v(\beta) = v({\bf P})$ holds for all $\beta \in \Re_+$.
 Therefore, 
 \[
 \lim_{\alpha \downarrow 0} \bar v(\alpha) = \lim_{\alpha \downarrow 0} \tilde v(1/\alpha) =v({\bf P}) > 
 v({\bf D}) = 
 \bar v(0),
 \]
 which contradicts the continuity of $\bar v$ at $\alpha=0$.

\section{A Counter-Example}\label{sec:counter_ex}

In the previous section, we established that the limiting pd-regularized optimal value function $\av$
is a monotonically decreasing continuous bijective function from $[0,\pi/2]$ to $[v({\bf D}), v({\bf P})]$ if 
{\bf P} and {\bf D} admit a finite nonzero duality gap and
the singularity degree of {\bf P} and {\bf D} is one.  In this section, we present a counter-example where
$\av([0,\pi/2])\not=[v({\bf D}), v({\bf P})]$.
The instance is obtained by modifying Example~\ref{ex:gap} and the singularity degree of this instance is two.  We will show that continuity of  $\av(\theta)$ is violated at $\theta=\pi/2$.

Let us consider the following  semidefinite program in dual format:
\begin{equation}\label{modRam}
\max_{y \in \Re^3} \ y_1\ {\rm{s.t.}}\ \ \left(\begin{array}{cccc} 1 -y_1 & 0 & -y_3 & -y_3 \\
0 & -y_2 & -y_1 & 0\\ 
-y_3 & -y_1 & -y_3 & 0\\
-y_3 & 0 & 0 & 0
\end{array}\right) \succeq 0.
\end{equation}
The $(4,4)$-element of the matrix in \eqref{modRam} is zero, this forces $y_3$ to be $0$ in order for $y \in \Re^3$ to be feasible.  This, however, implies that the $(3,3)$-element is $0$, which leads to $y_1 = 0$. Therefore, the optimal value of 
\eqref{modRam} is $0$.

The corresponding primal {\bf P} is
\begin{equation}\label{eq:modRam_P}
\min_{X \in \S^4}\ X_{11}\ \ {\rm{s.t.}}\ X_{11}+ 2 X_{23} = 1, \ X_{22}=0,\ X_{33}+2(X_{13}+X_{14})=0,\ \ X \succeq 0.
\end{equation}
If $X$ is feasible for \eqref{eq:modRam_P}, then 
we must have $X_{22}=0$ and $X_{23}=0$, which implies 
that $X_{11}=1$. Therefore the optimal value of \eqref{eq:modRam_P} is $1$.

In conclusion, there is a duality gap between the primal dual pair of problems \eqref{modRam}, \eqref{eq:modRam_P}.
However, in contrast to Example~\ref{ex:gap}, the singularity degree of \eqref{modRam} is two\footnote{To see that, we observe that the first reducing direction $s_1$ in \eqref{reduce} must be a $4\times 4$ matrix whose only nonzero and positive entry is its (4,4)-element. One can then confirm that the SDP obtained by performing one step of facial reduction with $s_1$ is essentially Example~\ref{ex:gap} which requires one more step of facial reduction to
recover strong feasibility.}, 
so we are outside of the scope of Theorem~\ref{theo:main}.

Now we consider the problem ${\bf D}(\varepsilon,\eta)$ as in \eqref{Dptbd}:
\[
\max_{y \in \Re^3}\ (1+\eta) y_1 + \eta y_2\ + \eta y_3 \ {\rm{s.t.}}\ \left(\begin{array}{cccc} 1+\varepsilon -y_1 & 0 & -y_3 & - y_3 \\
0 & \varepsilon -y_2 & -y_1 & 0\\ 
-y_3 & -y_1 & -y_3+\varepsilon& 0 \\
-y_3 & 0 & 0 & \varepsilon 
\end{array}\right) \succeq 0.
\]

Recalling the definition $\av$ in \eqref{eq:av_bar_v}, we prove the following theorem, which states that the 
range of $\av$ is $\{0, 1\}$. Therefore, in this case, duality gap fails to be filled completely in the sense of Theorem~\ref{theo:main}.

\begin{theorem} The following statements hold for the problem \eqref{modRam}:
\begin{enumerate}
\item $\av(\theta)=v({\bf P})=1\  for \ all \ \theta\in [0,\pi/2)$,
\item $\av(\pi/2)=v({\bf D})=0$.
\end{enumerate} 
\end{theorem}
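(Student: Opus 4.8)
The plan is to analyze the perturbed problem ${\bf D}(\varepsilon,\eta)$ directly and show that, along any ray $(\varepsilon,\eta)=(t\cos\theta,t\sin\theta)$ with $\theta<\pi/2$ (so $\eta/\varepsilon$ stays bounded), the optimal value tends to $1=v({\bf P})$, while along the pure-$\eta$ direction (which corresponds to $\theta=\pi/2$) it stays at $0=v({\bf D})$. The second statement is the easy one: for $\varepsilon=0$, ${\bf D}(0,\eta)$ has the same feasible set as ${\bf D}$ (the $(4,4)$ and $(3,3)$ entries still force $y_3=0$ and then $y_1=0$), so $v(0,\eta)=0$ for all $\eta>0$, and hence $\av(\pi/2)=\lim_{t\downarrow 0}v(0,t)=0$; this also matches item~3 of the analogue of Lemma~\ref{l3}. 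By Theorem~\ref{thm:tlmo}, $\av$ is monotone decreasing with $\av(0)=v({\bf P})=1$ and $\av(\pi/2)=v({\bf D})=0$, so the content of item~1 is really the claim that $\av$ is \emph{constant} on $[0,\pi/2)$, i.e.\ that the jump happens entirely at the endpoint.

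For item~1, I would fix $\theta\in[0,\pi/2)$, write $\varepsilon=t\cos\theta$, $\eta=t\sin\theta$, so that $\eta=\gamma\varepsilon$ for the fixed constant $\gamma=\tan\theta\ge 0$, and estimate $v(\varepsilon,\gamma\varepsilon)$ from below by exhibiting an explicit feasible point of ${\bf D}(\varepsilon,\gamma\varepsilon)$ whose objective value approaches $1$ as $\varepsilon\downarrow 0$. The natural guess is to take $y_1$ close to its ``unperturbed primal'' value $1$, say $y_1=1-\phi(\varepsilon)$ with $\phi(\varepsilon)\downarrow 0$, and choose $y_2,y_3$ as suitable (small) functions of $\varepsilon$ to keep the $4\times 4$ matrix positive semidefinite. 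One should think of it as: the $(1,1)$ entry becomes $\varepsilon+\phi(\varepsilon)$, the troublesome $2\times 2$ principal block in rows/columns $\{2,3\}$ is $\begin{psmallmatrix}\varepsilon-y_2 & -y_1\\ -y_1 & -y_3+\varepsilon\end{psmallmatrix}$, and the $(4,4)$ entry is $\varepsilon$; to dominate the off-diagonal $y_1\approx 1$ we need $y_2\approx -1/\varepsilon$-ish and $y_3\approx -1/\varepsilon$-ish, i.e.\ the feasible $y_2,y_3$ blow up like $1/\varepsilon$, but that is allowed. I would write down a one-parameter family (e.g.\ $y_1=1-c\varepsilon$, $y_2=-1/(c\varepsilon)+\cdots$, $y_3=-1/(c\varepsilon)+\cdots$ for an appropriate constant $c$, plus lower-order corrections to handle the coupling with row/column $1$ and $4$), verify positive semidefiniteness for small $\varepsilon$ via Schur complements / checking $2\times 2$ minors, and then compute the objective $(1+\eta)y_1+\eta y_2+\eta y_3$. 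The point is that $\eta=\gamma\varepsilon$ multiplies the blowing-up terms $y_2,y_3=O(1/\varepsilon)$, so $\eta y_2$ and $\eta y_3$ are $O(1)$; the delicate part is checking that the \emph{sign/size} of $\eta(y_2+y_3)$ does not spoil the limit, i.e.\ that one can arrange the construction so that $\eta(y_2+y_3)\to 0$ (or at least $\ge -o(1)$) while keeping feasibility. Combined with the upper bound $\av(\theta)\le\av(0)=v({\bf P})=1$ from monotonicity (Theorem~\ref{thm:tlmo}), this forces $\av(\theta)=1$.

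The main obstacle I anticipate is exactly this balancing act in the lower-bound construction: the perturbation $\varepsilon I$ opens up the feasible region just enough to let $y_1$ approach $1$, but only at the cost of sending $y_2,y_3$ to $-\infty$ at rate $1/\varepsilon$, and since the objective in ${\bf D}(\varepsilon,\eta)$ picks up the terms $\eta y_2+\eta y_3$, one must choose the trajectory so that these terms are controlled. I would handle this by being careful about the leading coefficients: pick $y_2=-1/(\sigma\varepsilon)$ and $y_3=-1/(\sigma\varepsilon)$ with $\sigma\to\infty$ slowly (e.g.\ $\sigma=\varepsilon^{-1/2}$, so $y_2,y_3=O(\varepsilon^{-1/2})$ and $\eta y_2,\eta y_3=O(\varepsilon^{1/2})\to 0$) — provided such a slower blow-up still suffices to keep the $\{2,3\}$-block positive semidefinite when $y_1\to 1$; if it does not, one is forced to the rate $1/\varepsilon$ and must instead show the $O(1)$ contributions $\eta(y_2+y_3)$ cancel or are nonnegative. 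I would resolve which regime occurs by a direct Schur-complement computation on the $4\times4$ matrix. Once the explicit feasible family is pinned down and positive semidefiniteness is verified for all sufficiently small $\varepsilon>0$, taking $t\downarrow 0$ along the ray yields $\av(\theta)\ge 1$, and together with the trivial upper bound and the already-proven item~2, the theorem follows.
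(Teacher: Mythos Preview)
Your overall strategy---construct an explicit feasible family for ${\bf D}(\varepsilon,\eta)$, verify positive semidefiniteness via Schur complements, and combine the resulting lower bound with the upper bound $\av(\theta)\le \av(0)=1$ from Theorem~\ref{thm:tlmo}---is exactly what the paper does, and your handling of item~2 is fine. The genuine gap is in your ansatz for $y_3$. You treat $y_2$ and $y_3$ symmetrically, proposing that both go to $-\infty$ (at rate $1/\varepsilon$ or $\varepsilon^{-1/2}$) to control the $\{2,3\}$ principal block. But $-y_3$ also sits in positions $(1,3)$ and $(1,4)$, while the $(1,1)$ entry $1+\varepsilon-y_1$ is going to $0$ and the $(4,4)$ entry is only $\varepsilon$. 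The $\{1,4\}$ minor $(1+\varepsilon-y_1)\,\varepsilon - y_3^2$ thus forces $y_3^2=O(\varepsilon)$, i.e.\ $y_3\to 0$, not $|y_3|\to\infty$. Both of your proposed regimes for $y_3$, and hence also your fallback, are infeasible for the full $4\times 4$ constraint.

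The construction that works is \emph{asymmetric}: take $y_3=-c\sqrt{\varepsilon}$ for a small constant $c>0$. Then the $(3,3)$ entry $-y_3+\varepsilon\approx c\sqrt{\varepsilon}$ (rather than $\varepsilon$), so the $\{2,3\}$ block only requires $(\varepsilon-y_2)\cdot c\sqrt{\varepsilon}\gtrsim y_1^2$, i.e.\ $y_2\approx -y_1^2/(c\sqrt{\varepsilon})$. Now $y_2$ blows up only like $\varepsilon^{-1/2}$, so $\eta y_2$ and $\eta y_3$ both vanish along any ray with $\theta<\pi/2$, and the objective tends to $y_1$. The Schur complement with respect to the $(4,4)$ entry turns the effective $(1,1)$ entry into $1+\varepsilon-y_1-y_3^2/\varepsilon=1+\varepsilon-y_1-c^2$, which limits you to $y_1=1-c^2-\zeta$; choosing $c,\zeta$ small makes the limiting objective $\ge 1-\delta$. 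This is precisely the paper's computation. Incidentally, note that setting $y_3=0$ collapses the $4\times 4$ problem to Example~\ref{ex:gap}, for which $\av(\theta)=1-\tan\theta<1$ near $\theta=0$; the nonzero $y_3\sim\sqrt{\varepsilon}$ is exactly what pushes $\av$ up to $1$ on all of $[0,\pi/2)$.
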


\begin{proof}
In view of \eqref{eq:av_bar_v}, it is enough to 
analyze $\bar v$ (defined in \eqref{bbb1}).
We will show that 
$\bar v(0)=0$ and $\bar v(\alpha)=1$ for all $\alpha>0$.
Since $v({\bf D})=\bar v(0)=0$ and $v({\bf P})=\bar v(\infty)=1$, this will imply that $\bar v$ is discontinuous at $0$
and is continuous at $+\infty$.
We analyze the problem ${\bf{D}}(t\alpha, t)$:
\begin{equation}\label{aaa}
\max_{y \in \Re^3}\ (1+t) y_1 + t y_2\ + t y_3 \ {\rm{s.t.}}\ \left(\begin{array}{cccc} 1+ t\alpha -y_1 & 0 & -y_3 & - y_3 \\
0 & t\alpha -y_2 & -y_1 & 0\\ 
-y_3 & -y_1 & -y_3+t\alpha& 0 \\
-y_3 & 0 & 0 & t\alpha 
\end{array}\right) \succeq 0.
\end{equation}

We already know that $0 \leq \bar v(\alpha) \leq 1$ for $0 \leq \alpha \leq \infty$. Let $\alpha > 0$ be fixed.
In view of the definition of $\bar {v}$ in \eqref{bbb1}, in order to prove that $\bar v(\alpha) = 1$ holds, it is enough to show that for any $\delta > 0$, if $t > 0$ is sufficiently small, the problem \eqref{aaa} admits a feasible solution $y^t \in \Re^3$ with objective value at least $1-\delta$, i.e.,
\begin{equation}\label{a2}
(1+t) y_{1}^t + t y_{2}^t \ + t y_{3}^t  \geq 1 -\delta.
\end{equation}
This would imply $v(t\alpha, t) \geq 1-\delta$ for sufficiently small $t$ and thus $\bar{v}(\alpha) \geq 1-\delta$ holds. Then, since $\delta$ is arbitrary, we obtain $\bar{v}(\alpha) = 1$.

In view of this, we focus our efforts on establishing the existence of feasible solutions as in \eqref{a2}.
We will proceed by analyzing what conditions must a feasible solution to \eqref{aaa} satisfy.
We note that since \eqref{aaa} is strongly feasible, there are feasible solutions corresponding to positive definite matrices such that the objective value is arbitrarily close to the optimal value. 
Therefore, it is enough to focus on analyzing the $y$'s that are feasible to \eqref{aaa} and correspond to positive definite matrices.

So suppose that $y \in \Re^3$ corresponds to a positive definite solution to \eqref{aaa}.
First, we take the Shur complement with respect to the $(4,4)$-element. 
Then, positive definiteness is equivalent to 
\begin{eqnarray}   
&&\left(\begin{array}{ccc} 1+ t\alpha -y_1 -\frac{y_3^2}{t \alpha} & 0 & -y_3 \\
0 & t\alpha -y_2 & -y_1 \\ 
-y_3 & -y_1 & -y_3+t\alpha 
\end{array}\right) \succ  0. \label{bbb}
\end{eqnarray}

Next, we take the Shur complement of  the left hand side of \eqref{bbb} with respect to the $(1,1)$-element.  
Then, positive definiteness is equivalent to the following conditions
\begin{eqnarray*} 
&&1+ t\alpha -y_1 -\frac{y_3^2}{t \alpha} > 0,\quad
\left(\begin{array}{cc}  
 t\alpha -y_2 & -y_1 \\ 
 -y_1 & -y_3+t\alpha -\frac{y_3^2}{1+ t\alpha -y_1 -\frac{y_3^2}{t \alpha}}
\end{array}\right) \succ  0.
\end{eqnarray*}
This is equivalent to 
\begin{equation} \label{ccc}
\begin{array}{l}
1+ t\alpha -y_1 -\frac{y_3^2}{t \alpha} > 0, \\
t\alpha -y_2 > 0,\ \ \ (t\alpha -y_2)\left(-y_3+t\alpha -\frac{y_3^2}{1+ t\alpha -y_1 -\frac{y_3^2}{t \alpha}}\right) - y_1^2 >0 .
\end{array}
\end{equation}
Diving the {third} inequality by $\left(-y_3+t\alpha -\frac{y_3^2}{1+ t\alpha -y_1 -\frac{y_3^2}{t \alpha}}\right)$ 
which is ensured to be positive in view of \eqref{ccc},
we obtain that
\begin{equation} \label{ddd}
\begin{array}{l}
t\alpha - y_2 > 0, \ \ \ 
1+ t\alpha -y_1 -\frac{y_3^2}{t \alpha} > 0 ,\\ 
t\alpha -\frac{y_1^2}{ -y_3+t\alpha -\frac{y_3^2}{1+ t\alpha -y_1 -\frac{y_3^2}{t \alpha}}          }  > y_2,\ \ \ 
 -y_3+t\alpha -\frac{y_3^2}{1+ t\alpha -y_1 -\frac{y_3^2}{t \alpha}} > 0.
\end{array}
\end{equation}
is equivalent to  \eqref{ccc}.  Therefore, if $y \in \Re^3$ satisfies \eqref{ddd}, then $y$ is a feasible solution to 
\eqref{aaa} corresponding to a positive definite matrix.

Let $y_3 \coloneqq-\gamma\sqrt{\alpha t}$, and let $y_1 \coloneqq1-\gamma^2 - \zeta$, where $\gamma, \zeta$ are arbitrary positive numbers.

Then, we have
\[
1+t\alpha-y_1 -\frac{y_3^2}{t\alpha}= 1+ t\alpha  - y_1 - \gamma^2 = t\alpha+\zeta>0
\] 
and
\[
-y_3+t\alpha -\frac{y_3^2}{1+ t\alpha -y_1 -\frac{y_3^2}{t \alpha}}=
\gamma\sqrt{\alpha t}+\alpha t - \frac{\gamma^2\alpha t}{\alpha t + \zeta}.
\]
Furthermore, let
\[
y_2 \coloneqq t\alpha -\frac{y_1^2}{ -y_3                  +t\alpha -\frac{y_3^2}{1+ t\alpha -y_1 -\frac{y_3^2}{t \alpha}}} -\xi = 
t\alpha -\frac{(1-\gamma^2-\zeta)^2}{\gamma\sqrt{\alpha t}+\alpha t - \frac{\gamma^2\alpha t}{\alpha t + \zeta}}-\xi,
\]
where $\xi>0$ is an arbitrary positive number. 
Then $y \coloneqq  (y_1, y_2, y_3)$ defined as above constitutes a strict feasible solution to \eqref{aaa}
when $t>0$ is sufficiently small, since they satisfy \eqref{ddd} when $t>0$ is sufficiently small.  
Furthermore, the objective value
\begin{eqnarray*}
(1+t\alpha)y_1+t y_2 + ty_3&=&(1+t\alpha)(1-\gamma^2-\zeta)+t^2\alpha-\frac{t (1-\gamma^2-\zeta)^2}{\gamma\sqrt{\alpha t}
 +\alpha t  - \frac{\gamma^2\alpha t}{\alpha t + \zeta}} \\
 &&\ \ -t\xi -\gamma t\sqrt{\alpha t},
\end{eqnarray*}
which can be arbitrarily close to $1-\gamma^2-\zeta$, if $t > 0$ is taken sufficiently close to zero.  By taking $\gamma^2+\zeta =\delta/2$, we obtain a strict feasible solution 
to \eqref{aaa} satisfying \eqref{a2} for $t$ sufficiently close to 0, as we desired.
\end{proof}

\section{Concluding Discussion}\label{sec:conc}
In this paper, we analyzed the behavior of $\av(\theta)$
which was introduced in \cite{TLMO2019} to bridge the primal and dual optimal value in the presence of 
a nonzero duality gap. 
We assumed that {\bf P} and {\bf D} are feasible and they have nonzero duality gap, 
and showed that, surprisingly,   
$\av$ is a monotone bijective function
from $[0,\pi/2]$ to $[v({\bf D}),v({\bf P})]$
 if the singularity degrees of {\bf P} and {\bf D}
are both one, thus filling the duality gap completely.
However, we also produced an example showing that when the singularity degree is higher, then 
$\av$ can be {\em discontinuous} at $\theta=0$.
The study of deeper relations between the discontinuity of $\av$ and the singularity degree 
of {\bf P} and {\bf D} is an interesting topic for further research.
Another interesting direction of further research is to extend the results  to the case where 
either {\bf P} or {\bf D} is weakly infeasible (or  both!).  We note that there are several papers focused on weakly infeasible problems and their underlying structure \cite{waki_how_2012,lourenco_muramatsu_tsuchiya,pataki_touzov_an_echelon}. These works may provide clues on how to extend our results to the weakly infeasible case. 


\bibliographystyle{abbrvurl}
\bibliography{perturb_gap} 

\begin{thebibliography}{10}

\bibitem{BC75}
G.~P. Barker and D.~Carlson.
\newblock Cones of diagonally dominant matrices.
\newblock {\em Pacific Journal of Mathematics}, 57(1):15--32, 1975.

\bibitem{borwein_facial_1981}
J.~M. Borwein and H.~Wolkowicz.
\newblock Facial reduction for a cone-convex programming problem.
\newblock {\em Journal of the Australian Mathematical Society (Series A)},
  30(03):369--380, 1981.

\bibitem{DW17}
D.~Drusvyatskiy and H.~Wolkowicz.
\newblock The many faces of degeneracy in conic optimization.
\newblock Technical report, University of Washington, 2017.

\bibitem{LP17}
M.~Liu and G.~Pataki.
\newblock Exact duals and short certificates of infeasibility and weak
  infeasibility in conic linear programming.
\newblock {\em Mathematical Programming}, 167(2):435--480, Feb 2018.

\bibitem{Lourenco17}
B.~F. Louren\c{c}o.
\newblock {Amenable cones: error bounds without constraint qualifications}.
\newblock {\em Mathematical Programming}, 186:1--48, 2021.

\bibitem{lourenco_muramatsu_tsuchiya}
B.~F. Louren\c{c}o, M.~Muramatsu, and T.~Tsuchiya.
\newblock A structural geometrical analysis of weakly infeasible {SDP}s.
\newblock {\em Journal of the Operations Research Society of Japan},
  59(3):241--257, 2016.

\bibitem{LMT15}
B.~F. Louren\c{c}o, M.~Muramatsu, and T.~Tsuchiya.
\newblock Facial reduction and partial polyhedrality.
\newblock {\em SIAM Journal on Optimization}, 28(3):2304--2326, 2018.

\bibitem{pataki_handbook}
G.~Pataki.
\newblock The geometry of semidefinite programming.
\newblock In H.~Wolkowicz, R.~Saigal, and L.~Vandenberghe, editors, {\em
  Handbook of semidefinite programming: theory, algorithms, and applications}.
  Kluwer Academic Publishers, online version at
  \url{https://gaborpataki.web.unc.edu/wp-content/uploads/sites/14119/2018/07/chapter.pdf},
  2000.

\bibitem{pataki_strong_2013}
G.~Pataki.
\newblock Strong duality in conic linear programming: Facial reduction and
  extended duals.
\newblock In {\em Computational and Analytical Mathematics}, volume~50, pages
  613--634. Springer New York, 2013.

\bibitem{pataki_touzov_an_echelon}
G.~Pataki and A.~Touzov.
\newblock An echelon form of weakly infeasible semidefinite programs and bad
  projections of the psd cone.
\newblock {\em Foundations of Computational Mathematics}, Nov 2022.
\newblock \href {https://doi.org/10.1007/s10208-022-09552-0}
  {\path{doi:10.1007/s10208-022-09552-0}}.

\bibitem{ipm:Potra16}
F.~A. Potra and R.~Sheng.
\newblock A superlinearly convergent primal--dual infeasible--interior--point
  algorithm for semidefinite programming.
\newblock {\em SIAM Journal on Optimization}, 8:1007--1028, 1998.

\bibitem{Ramana95anexact}
M.~V. Ramana.
\newblock An exact duality theory for semidefinite programming and its
  complexity implications.
\newblock {\em Mathematical Programming}, 77, 1995.

\bibitem{RT74}
R.~T. Rockafellar.
\newblock {\em Conjugate Duality and Optimization}.
\newblock No. 16 in Conference Board of Math. Sciences Series. SIAM
  Publications, 1974.

\bibitem{rockafellar}
R.~T. Rockafellar.
\newblock {\em {Convex Analysis}}.
\newblock Princeton University Press, 1997.

\bibitem{sremac2017complete}
S.~Sremac, H.~Woerdeman, and H.~Wolkowicz.
\newblock Complete facial reduction in one step for spectrahedra.
\newblock {\em arXiv e-prints:1710.07410}, 2017.
\newblock \href {http://arxiv.org/abs/1710.07410} {\path{arXiv:1710.07410}}.

\bibitem{sturm_error_2000}
J.~F. Sturm.
\newblock Error bounds for linear matrix inequalities.
\newblock {\em {SIAM} Journal on Optimization}, 10(4):1228--1248, Jan. 2000.

\bibitem{TLMO2019}
T.~Tsuchiya, B.~F. Louren\c{c}o, M.~Muramatsu, and T.~Okuno.
\newblock A limiting analysis on regularization of singular {SDP} and its
  implication to infeasible interior-point algorithms.
\newblock {\em Mathematical Programming (published online,
  doi:10.1007/s10107-022-01891-8)}, Nov 2022.

\bibitem{waki_how_2012}
H.~Waki.
\newblock How to generate weakly infeasible semidefinite programs via
  {L}asserre's relaxations for polynomial optimization.
\newblock {\em Optimization Letters}, 6(8):1883--1896, Dec. 2012.

\bibitem{WM13}
H.~Waki and M.~Muramatsu.
\newblock Facial reduction algorithms for conic optimization problems.
\newblock {\em Journal of Optimization Theory and Applications},
  158(1):188--215, 2013.

\bibitem{sdp_handbook}
H.~Wolkowicz, R.~Saigal, and L.~Vandenberghe.
\newblock {\em Handbook of Semidefinite Programming: Theory, Algorithms, and
  Applications}.
\newblock Springer, 2000.

\end{thebibliography}

{
\appendix
\section{Proof of item 2 of Proposition \ref{prop:sd1}}\label{app:proof}
In this appendix, we provide a proof of item 2 of Proposition~\ref{prop:sd1}.
Without loss of generality, 
we may assume that $A^1, \ldots, A^m$ are linearly independent. 
Let ${\cal V} \coloneqq \{\sum_{i=1}^m A^i y_i \mid y\in \Re^m\}.$
Since {\bf RD} is feasible, the subspace 
\[
{\cal W} \coloneqq 
\{W=\sum_{i=1}^m A^i y_i \mid y \in \Re^m, \ W_{12}=0,  W_{22}=0 \}
\]
of ${\cal V}$ is nonempty. Here we are using the convention that given $Z \in \S^n$, the matrices $Z_{11} \in \S^r, Z_{12} \in \Re^{r\times(n-r)}, Z_{22} \in \S^{n-r}$ denote the blocks of $Z$ according to the division given in Proposition~\ref{prop:sd1}.
We also recall that, in view of item~(1) of Proposition~\ref{prop:sd1}, there exists at least one $y$ for which $L_{12}(y) = 0$ and $L_{22}(y)=0$. This implies that $\cal L$ satisfies
\[
{\cal L}=\left\{ \left(\begin{array}{cc} 0 &  \sum_{i=1}^m A_{12}^i y_i \\
\sum_{i=1}^m (A_{12}^i)^T y_i &  \sum_{i=1}^m A_{22}^i y_i \end{array}\right)  \,\middle|\, y\in \Re^m \right\}
\]
and
\begin{equation}\label{Cbelongs}
\left(\begin{array}{cc} 0 & C_{12} \\
C_{12}^T  & C_{22} \end{array}\right) \in {\cal L}.
\end{equation}
In particular, $\cal L$ is a linear space. 
 We define $\cal Y$ as the subspace of $\S^n$ whose $(1,1)$-block is zero, i.e., $Y \in \S^n$ belongs to $\cal Y$ if and only if $Y_{11} = 0$.
Then, let $\pi: \S^n \rightarrow {\cal Y}$ denote the orthogonal projection onto ${\cal Y}$ so that the following holds for every $Y \in \S^n$:
\[
\pi\left(\left(\begin{array}{cc}Y_{11}& Y_{12}\\ Y_{12}^T & Y_{22}\end{array}\right)\right)=
\left(\begin{array}{cc} 0& Y_{12}\\ Y_{12}^T & Y_{22}\end{array}\right).
\]

Recall that our perturbation space is ${\cal T}\oplus{\cal L}$.  Given $S\in {\cal T}\oplus{\cal L}$,
$S$ is represented uniquely as  the sum of $S_{\cal T} \in {\cal T}$ and $S_{\cal L} \in {\cal L}$.
This decomposition is given as 
\[
S=
s_{11} I + 
\left(\begin{array}{cc} 0& S_{12}\\ S_{12}^T & S_{22} - s_{11} I_{22}\end{array}\right),\]
where $I = \begin{pmatrix}
I_{11} & 0\\
0 & I_{22}
\end{pmatrix}$. With that, we have
\begin{equation}\label{Sdcomp}
S_{\cal T} = s_{11}I\in {\cal T},
\  S_{\cal L} = \left(\begin{array}{cc} 0& S_{12}\\ S_{12}^T & S_{22} - s_{11} I_{22}\end{array}\right)\in {\cal L}.
\end{equation}

Now we are ready to proceed. 
For the proof of item 2 of Proposition \ref{prop:sd1}, we take a suitable basis of $\cal V$ as described in the following proposition.
\begin{proposition}\label{prop:bi}
There are matrices $B^1, \ldots, B^m \in \S^n$ with the following properties:
\begin{enumerate}
\item $\{B^1, \ldots, B^m\}$ forms a basis of $\cal V$.
\item $\{B^1, \ldots, B^k\}$ forms a basis of $\cal W$ ($k < m$), 
in particular, $B^i_{12}=0$ and $B^i_{22}=0$ for all $i=1, \ldots, k$.
\item Every element $V= \sum_{i=1}^m A^i y_i \in {\cal V} $ is written uniquely as 
\begin{align*}
V &=
\sum_{i=1}^k  B^i z_i +  
\sum_{i=k+1}^m  B^i z_i \\
&=
\left(\begin{array}{cc} \sum_{i=1}^k  B_{11}^i z_i & 0    \\
0 &  0 \end{array}\right) 
+
\left(\begin{array}{cc} \sum_{i=k+1}^m  B_{11}^i z_i & \sum_{i=k+1}^m  B_{12}^i z_i    \\
\sum_{i=k+1}^m  (B_{12}^i)^T z_i  & \sum_{i=k+1}^m  B_{22}^i z_i \end{array}\right).
 \end{align*}
\item There exists a nonsingular $m\times m$ matrix $D$ such that $B^i =\sum_{j=1}^m d_{ij} A^j$ $(i=1,\ldots,m)$ 
holds.  If $y,z \in \Re^m$ are such that $\sum_{i=1}^m A^i y_i =\sum_{j=1}^m B^j z_j  $, then $y^T = z^T D$.
\end{enumerate}
\end{proposition}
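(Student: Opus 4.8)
The plan is to prove Proposition~\ref{prop:bi} by a straightforward change-of-basis argument. First I would fix the dimension bookkeeping. Since $A^1,\dots,A^m$ are assumed linearly independent, $\dim {\cal V} = m$, and ${\cal W}$ is a linear subspace of ${\cal V}$, so $k := \dim {\cal W}$ satisfies $k \le m$. Writing $\pi$ for the orthogonal projection onto ${\cal Y}$ (the matrices with zero $(1,1)$-block), we have ${\cal W} = {\cal V} \cap \ker\pi$ and ${\cal L} = \pi({\cal V})$, so by rank--nullity $m = k + \dim{\cal L}$; hence $k < m$ exactly when ${\cal L} \ne \{0\}$, which is the case in the setting of item~2 of Proposition~\ref{prop:sd1} where the perturbation space ${\cal T}\oplus{\cal L}$ is genuinely nontrivial. (If ${\cal L}=\{0\}$ the statement degenerates and one simply takes $B^i=A^i$ and $D=I$.)

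Next I would build the basis. Choose any basis $\{B^1,\dots,B^k\}$ of ${\cal W}$; by the definition of ${\cal W}$ each of these matrices has $B^i_{12}=0$ and $B^i_{22}=0$, which is item~2. Because $\{B^1,\dots,B^k\}$ is a linearly independent subset of ${\cal V}$, it extends to a basis $\{B^1,\dots,B^m\}$ of ${\cal V}$, giving item~1. For item~3, any $V\in{\cal V}$ has a unique expansion $V=\sum_{i=1}^m B^i z_i$; the partial sum $\sum_{i=1}^k B^i z_i$ lies in ${\cal W}$ and is therefore block-diagonal with a nonzero $(1,1)$-block only, while $\sum_{i=k+1}^m B^i z_i$ carries the remaining blocks, and reading off the $(1,1),(1,2),(2,2)$ blocks of each summand produces exactly the displayed decomposition; uniqueness of the $z_i$ is uniqueness of basis coordinates.

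For item~4 I would use that each $B^i$ belongs to ${\cal V}=\mathrm{span}\{A^1,\dots,A^m\}$, so there are uniquely determined scalars with $B^i=\sum_{j=1}^m d_{ij}A^j$; set $D=(d_{ij})$. To see $D$ is nonsingular, suppose $c^TD=0$ for some $c\in\Re^m$; then $\sum_i c_iB^i = \sum_j\bigl(\sum_i c_i d_{ij}\bigr)A^j = 0$, so linear independence of the $B^i$ forces $c=0$, i.e.\ $D$ has trivial left kernel. Finally, if $\sum_i A^i y_i=\sum_j B^j z_j$, substitute $B^j=\sum_l d_{jl}A^l$ to obtain $\sum_l A^l y_l = \sum_l\bigl(\sum_j d_{jl}z_j\bigr)A^l$, and equating coefficients (using independence of the $A^l$ once more) gives $y_l=\sum_j z_j d_{jl}$, that is $y^T=z^TD$. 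The whole argument is elementary; the only places that need care are justifying $k<m$ (via the nontriviality of ${\cal L}$) and keeping the index order in $D$ straight so that the transpose in $y^T=z^TD$ comes out correctly, which is what I would double-check at the end.
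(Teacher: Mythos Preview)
Your proof is correct and follows essentially the same approach as the paper: pick a basis of ${\cal W}$, extend it to a basis of ${\cal V}$, and note that items~3 and~4 are routine linear-algebra consequences. Your write-up is in fact more detailed than the paper's (which dispatches items~3 and~4 in a single sentence) and, in particular, you explicitly address why $k<m$ via the nontriviality of ${\cal L}$, a point the paper's proof leaves implicit.
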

\begin{proof}
To obtain the basis $B^1, \ldots, B^m$ satisfying items 1 and 2, 
we first construct a basis of $\cal W$ and then expand it by adding independent elements from $\cal V$
until the chosen elements forms a basis of $\cal V$.  Item 3 is a direct consequence of items 1 and 2,
and item 4 follows from a standard argument in linear algebra 
since $\{A^1,\ldots A^m\}$ and $\{B^1,\ldots,B^m\}$ are two bases of the same linear space $\cal V$. 
\end{proof}

\begin{lemma}\label{unique}
With the $B^i$ as in Proposition~\ref{prop:bi}, define $\tau:\Re^{m-k}\to {\cal L}$, $\tau_{12}:\Re^{m-k}\to \Re^{r\times(n-r)}$ and $\tau_{22}:\Re^{m-k}  \to \S^{n-r}$ such that
\begin{eqnarray*}
&&\tau(z_{k+1}, \ldots, z_m) \nonumber\\ 
&&\ \ \coloneqq
\left(\begin{array}{cc} 0 & \tau_{12}(z_{k+1},\ldots,z_m)    \\
\tau_{12}(z_{k+1},\ldots,z_m)^T &  \tau_{22}(z_{k+1},\ldots,z_m)\end{array}\right) \nonumber \\
&&\ \ \coloneqq
\left(\begin{array}{cc} 0& \sum_{i=k+1}^m B^i_{12} z_i  \\
\sum_{i=k+1}^m B^i_{12} z_i &  \sum_{i=k+1}^m B^i_{22} z_i \end{array}\right) 
 = \pi\left(\sum_{i=k+1}^m B^i z_i\right).\ 
\end{eqnarray*}
Then, $\tau$ is a bijective linear map.
\end{lemma}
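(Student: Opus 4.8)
The plan is to establish that $\tau$ is a well-defined linear map, that it is injective, and that it is surjective onto $\mathcal{L}$. Linearity is immediate from the definition, since $\tau$ is the composition of the linear map $(z_{k+1},\ldots,z_m) \mapsto \sum_{i=k+1}^m B^i z_i$ with the orthogonal projection $\pi$, both of which are linear. So the substance is in injectivity and surjectivity.

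For injectivity, suppose $\tau(z_{k+1},\ldots,z_m) = 0$. By definition this means $\sum_{i=k+1}^m B^i_{12} z_i = 0$ and $\sum_{i=k+1}^m B^i_{22} z_i = 0$, i.e., $\pi\bigl(\sum_{i=k+1}^m B^i z_i\bigr) = 0$. Hence the matrix $W \coloneqq \sum_{i=k+1}^m B^i z_i$ has $W_{12} = 0$ and $W_{22} = 0$, so $W \in \mathcal{W}$. But $\{B^1,\ldots,B^k\}$ is a basis of $\mathcal{W}$ by item~2 of Proposition~\ref{prop:bi}, so $W = \sum_{i=1}^k B^i z_i'$ for some scalars $z_i'$. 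Then $\sum_{i=1}^k B^i z_i' - \sum_{i=k+1}^m B^i z_i = 0$, and since $\{B^1,\ldots,B^m\}$ is a basis of $\mathcal{V}$ (hence linearly independent) by item~1, all coefficients vanish; in particular $z_{k+1} = \cdots = z_m = 0$. This proves injectivity.

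For surjectivity, let $N \in \mathcal{L}$ be arbitrary. By the definition of $\mathcal{L}$ and item~1 of Proposition~\ref{prop:sd1}, $N = \begin{pmatrix} 0 & L_{12}(\hat y) \\ L_{12}(\hat y)^T & L_{22}(\hat y) \end{pmatrix}$ for some $\hat y$, so $N = \pi(V)$ for some $V \in \mathcal{V}$ with $V_{11}$-block irrelevant; more directly, $\pi(\mathcal{V}) = \mathcal{L}$. Writing $V = \sum_{i=1}^k B^i z_i + \sum_{i=k+1}^m B^i z_i$ using item~3, and applying $\pi$, the first sum dies because $B^i_{12} = B^i_{22} = 0$ for $i \leq k$; hence $N = \pi(V) = \pi\bigl(\sum_{i=k+1}^m B^i z_i\bigr) = \tau(z_{k+1},\ldots,z_m)$. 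Thus $\tau$ maps onto $\mathcal{L}$. Combining the three parts gives that $\tau$ is a bijective linear map.

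I do not anticipate a serious obstacle here; the only point requiring care is keeping the bookkeeping straight between $\mathcal{W}$ (the subspace of $\mathcal{V}$ with vanishing $(1,2)$- and $(2,2)$-blocks, spanned by $B^1,\ldots,B^k$) and the complementary directions $B^{k+1},\ldots,B^m$, and making sure that the identity $\pi(\mathcal{V}) = \mathcal{L}$ is correctly invoked from item~1 of Proposition~\ref{prop:sd1}. The linear independence of $\{B^1,\ldots,B^m\}$ from item~1 of Proposition~\ref{prop:bi} is what drives both injectivity and the uniqueness in item~3, so that is the one ingredient I would be careful to cite explicitly.
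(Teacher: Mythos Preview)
Your proof is correct and follows essentially the same route as the paper's: injectivity via the observation that $\tau(z_{k+1},\ldots,z_m)=0$ forces $\sum_{i=k+1}^m B^i z_i\in\mathcal{W}$ and then linear independence of the $B^i$ kills all coefficients; surjectivity via $\mathcal{L}=\pi(\mathcal{V})$ together with the fact that $\pi(B^i)=0$ for $i\le k$. The only cosmetic difference is that the paper proves surjectivity first and does not spell out linearity, and your sentence ``$N=\pi(V)$ for some $V\in\mathcal{V}$'' is a slight overreach from the bare definition of $\mathcal{L}$ (since $L(\hat y)=C-\sum A^i\hat y_i$ need not lie in $\mathcal{V}$), but you immediately correct course by invoking $\pi(\mathcal{V})=\mathcal{L}$, which is exactly what the paper uses.
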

\begin{proof}
We first observe that $\tau$ is surjective. Recall that $\{B^{1}, \ldots,B^m\}$ is a basis of $\cal V$ and 
$\cal L=\pi({\cal V})$.  
Pick any element $X$ in $\cal L$.  Since $\cal L=\pi({\cal V})$, 
there exists $z\in \Re^m$ such that  $X=\pi(\sum_{i=1}^m B^i z_i)$.
Then it follows that
\[
X=\pi\left(\sum_{i=1}^m B^i z_i\right)=\pi\left(\sum_{i=k+1}^m B^i z_i\right) = \sum_{k+1}^m \pi(B^i) z_i = \tau(z^{k+1},\ldots,z^m),
\]
and this proves that $\tau$ is a surjection.

Next, we check that $\tau$ is injective by showing that its kernel is trivial. Suppose that $\tau(z_{k+1},\ldots, z_m) = 0$. 
That is, 
\[
\left(\begin{array}{cc} 0& \sum_{i=k+1}^m B^i_{12} z_i  \\
\sum_{i=k+1}^m B^i_{12} z_i &  \sum_{i=k+1}^m B^i_{22} z_i \end{array}\right)  = \begin{pmatrix}
0 & 0\\
0 & 0
\end{pmatrix}
\]
holds.
Therefore, $\sum _{i=k+1}^m B^{i}z_i \in \cal{W}$. Since $B^1, \ldots, B^k$ form a basis for $\cal{W}$, there are $(z_1,\ldots, z_k)$ such that \[\sum _{i=1}^k B^{i}z_i = \sum _{i=k+1}^m B^{i}z_i.\]
By the linear independence of the $B^i$, all the $z_i$ must be $0$, so $\tau$ is injective as claimed.
Thus, $\tau$ is surjective and injective, and hence $\tau$ is bijective, and the proof is complete.
\end{proof}

Now, we are ready to prove item 2, i.e.,
the optimal value function $w(S)$ of the following problem:
\[
{\bf RD}(S)\ \ 
\min_{y} b^Ty\ \ \hbox{subject\ to}\ L_{11}(y)+s_{11}I_{11} \succeq 0, \ L_{12}(y) = S_{12},\ L_{22}(y)+s_{11}I_{22}=S_{22},
\]
where $S\in {\cal T} \oplus {\cal L}$, is continuous at $S=0$.  We note that $s_{11} I_{11} = S_{11}$ holds for any
$S\in {\cal T} \oplus {\cal L}$.

To this end, we rewrite this problem in terms of $B^i, i=1, \ldots, m$ and $z$, to obtain that
\begin{equation}\label{Bz}
\begin{array}{lll}
\min_{z} b^TD^T z &\hbox{s.t.} & C_{11}+s_{11}I_{11} - \sum_{j=1}^m B^j_{11} z_j \succeq 0, \\
 & &C_{12}-\sum_{i=k+1}^m B^i_{12} z_i=S_{12},\\  
 & & C_{22}+s_{11}I_{22}-\sum_{i=k+1}^m B^i_{22} z_i=S_{22}.
 \end{array}
\end{equation}
Now, observe that the two equality constraints in \eqref{Bz} can be written as 
\begin{equation}\label{tau1}
\tau_{12}(z_{k+1},\ldots,z_m)=C_{12}-S_{12},\ \ \ \tau_{22}(z_{k+1},\ldots,z_m)=C_{22}-(S_{22}-s_{11}I_{22}).
\end{equation}
Since $\tau$ is a bijective mapping from $\Re^{m-k}$ to $\cal L$ as was shown in Lemma \ref{unique}, its inverse mapping is well-defined.
Let us denote by $\tau^{\rm inv}$ the inverse mapping of $\tau$.  
Slightly abusing the notation, we write $\tau^{\rm inv}(\widetilde L_{12}, \widetilde L_{22})$ as a function of $\widetilde L_{12}$ and $\widetilde L_{22}$ implicitly 
assuming that 
$\left(\begin{array}{cc} 0 & \widetilde L_{12}\\ \widetilde L_{12}^T & \widetilde L_{22} \end{array}\right) \in {\cal L}$.
Then, recalling
that 
$\left(\begin{array}{cc}0 & C_{12}\\ C_{12}^T & C_{22}\end{array}\right) \in \mathcal{L}$ and 
$\left(\begin{array}{cc}0 & S_{12}\\ S_{12}^T & S_{22}-s_{11}I\end{array}\right) \in \mathcal{L}$
 (see \eqref{Cbelongs}, \eqref{Sdcomp}), we see that  \eqref{tau1} implies that
\[
(z_{k+1}, \ldots, z_{m})=(\tau_{k+1}^{\rm inv}(C_{12}-S_{12},C_{22}-(S_{22}-s_{11}I_{22})), \ldots, \tau_m^{\rm inv}(C_{12}-S_{12},C_{22}-(S_{22}-s_{11}I_{22}))
\] 
must hold, that is, $(z_{k+1}, \ldots, z_{m})$ is determined uniquely by the perturbation $S$ in \eqref{Bz}.
Therefore, \eqref{Bz} is equivalent to the following semidefinite programs with respect to $(z_1, \ldots, z_k)$:
\begin{eqnarray}
\min_{(z_1,\ldots,z_k)}&& \sum_{i=1}^k(D b)_i z_i + \sum_{i=k+1}^m(Db)_i \tau_i^{\rm inv}(C_{12}-S_{12},C_{22}-(S_{22}-s_{11}I_{22})) \nonumber\\
\hbox{s.t.} &&
(C_{11}+s_{11}I_{11} - \sum_{j=k+1}^m B^j_{11}\tau_{j}^{\rm inv}(C_{12}-S_{12},C_{22}-(S_{22}-s_{11}I_{22})))\label{Dz} \\
&&\ \ \ - \sum_{j=1}^k B^j_{11} z_j \succeq 0.\nonumber
\end{eqnarray}
The optimal value function of \eqref{Dz} is is equal to $w(S)$.

Since $\tau^{\rm inv}$ is a linear mapping and  \eqref{Cbelongs} and \eqref{Sdcomp} holds, we have, for each $j =k+1, \ldots, m,$
\[
\tau_{j}^{\rm inv}(C_{12}-S_{12},C_{22}-(S_{22}-s_{11}I_{22}))=\tau_{j}^{\rm inv}(C_{12},C_{22})-\tau_{j}^{\rm inv}(S_{12},S_{22}-s_{11}I_{22}).
\]
Therefore, we may further rewrite \eqref{Dz} as
\begin{eqnarray}
&&
\min_{(z_1,\ldots,z_k)} \sum_{i=1}^k(D b)_i z_i + \sum_{i=k+1}^m(Db)_i \tau_i^{\rm inv}(C_{12},C_{22}) 
 - \sum_{i=k+1}^m(Db)_i \tau_i^{\rm inv}(S_{12},S_{22}-s_{11}I_{22}) \nonumber \\
&&\ \ \hbox{s.t.} \ 
(C_{11}+ \sum_{j=k+1}^m B^j_{11} \tau_{j}^{\rm inv}(C_{12},C_{22})) +s_{11}I_{11} \label{Ez}\\
&&\ \ \ \ \ \ \ \ -\sum_{j=k+1}^{m} B^j_{11} \tau_{j}^{\rm inv}(S_{12},S_{22}-s_{11}I_{22}) 
- \sum_{j=1}^k B^j_{11} z_j \succeq 0. \nonumber
\end{eqnarray}
Overall,  \eqref{Ez} is equivalent to ${\bf RD}(S)$ and  
the optimal value of \eqref{Ez} as a function of perturbation $S\in {\cal T}\oplus{\cal L}$ coincides with $w(S)$.  

Since the second and third term of the objective function is a constant and a linear function of $S$, 
respectively, 
in order to establish continuity of $w(S)$ at $S=0$, 
it is enough to show continuity of the optimal value function of  the semidefinite program
\begin{eqnarray}
&&
\min_{(z_1,\ldots,z_k)} \sum_{i=1}^k(D b)_i z_i  \nonumber \\
&&\ \ \ \hbox{s.t.} \ 
(C_{11}+ \sum_{j=k+1}^m B^j_{11} \tau_{j}^{\rm inv}(C_{12},C_{22})) +s_{11}I_{11} \label{Fz}\\
&&\ \ \ \ \ \ \ \ -\sum_{j=k+1}^{m} B^j_{11} \tau_{j}^{\rm inv}(S_{12},S_{22}-s_{11}I_{22}) 
- \sum_{j=1}^k B^j_{11} z_j \succeq 0. \nonumber
\end{eqnarray}
obtained by dropping the 
second and the third terms of the objective function in \eqref{Ez}.
The feasible region of \eqref{Fz} is the same as \eqref{Ez} and hence \eqref{Fz} is strongly feasible at $S=0$.
The perturbation term
\[
s_{11}I_{11} -\sum_{j=k+1}^{m} B^j_{11} \tau_{j}^{\rm inv}(S_{12},S_{22}-s_{11}I_{22})
\]
exists only at ``the constant part of the constraint'' in $\eqref{Fz}$ and this term vanishes at $S=0$.  
Therefore, we can directly apply Theorem 4.1.9 of \cite{sdp_handbook} to show that the optimal value function 
of \eqref{Fz} is continuous at $S=0$ and so is the optimal value function of \eqref{Ez}.
This completes the proof.

}
\end{document}